\documentclass[12pt]{article}

\usepackage{amsmath,amssymb,amsthm,amscd}
\usepackage[colorlinks, linkcolor=blue, anchorcolor=gree, citecolor=red]{hyperref}
\usepackage[numbers,sort&compress]{natbib}
\usepackage{graphicx}

\textheight 216mm   
\textwidth 152mm    

\oddsidemargin 3.6mm    
\topmargin -10mm    
\makeatletter

\newcommand{\Rmnum}[1]{\expandafter\@slowromancap\romannumeral #1@}
\makeatother

\begin{document}

\newcommand{\Cyc}{{\rm{Cyc}}}\newcommand{\diam}{{\rm{diam}}}
\newcommand{\Cay}{{\rm Cay}}
\newtheorem{thm}{Theorem}[section]
\newtheorem{pro}[thm]{Proposition}
\newtheorem{lem}[thm]{Lemma}
\newtheorem{exa}[thm]{Example}
\newtheorem{fac}[thm]{Fact}
\newtheorem{cor}[thm]{Corollary}
\newtheorem{constr}[thm]{Construction}
\theoremstyle{definition}
\newtheorem{ex}[thm]{Example}
\newtheorem{ob}[thm]{Observtion}
\newtheorem{remark}[thm]{Remark}
\newcounter{foo}[subsection]
\newcounter{fooo}[section]
\newtheorem{step}[foo]{Step}
\newtheorem{stepp}[fooo]{Step}
\newcommand{\bth}{\begin{thm}}
\renewcommand{\eth}{\end{thm}}
\newcommand{\bex}{\begin{ex}}
\newcommand{\eex}{\end{ex}}
\newcommand{\bre}{\begin{remark}}
\newcommand{\ere}{\end{remark}}

\newcommand{\bal}{\begin{aligned}}
\newcommand{\eal}{\end{aligned}}
\newcommand{\beq}{\begin{equation}}
\newcommand{\eeq}{\end{equation}}
\newcommand{\ben}{\begin{equation*}}
\newcommand{\een}{\end{equation*}}

\newcommand{\bpf}{\begin{proof}}
\newcommand{\epf}{\end{proof}}
\renewcommand{\thefootnote}{}
\newcommand{\sdim}{{\rm sdim}}

\def\beql#1{\begin{equation}\label{#1}}
\title{\Large\bf Perfect codes in quintic Cayley graphs on abelian groups}
\author{{Yuefeng Yang$^{1}$,\quad Xuanlong Ma$^{2}$,\quad Qing Zeng$^{3,4}$\footnote{Corresponding author.}
}\\[15pt]
{\small\em $^1$School of Science, China University of Geosciences, Beijing 100083, China}\\
{\small\em $^2$School of Science, Xi'an Shiyou University, Xi'an 710065, China}\\
{\small\em $^3$Department of Mathematical Sciences, Tsinghua University, Beijing 100084, China}\\
{\small\em $^4$Laboratory of Mathematics and Complex Systems {\rm (}Ministry of Education{\rm )},} \\
{\small\em School of Mathematical Sciences, Beijing Normal University, Beijing 100875, China}\\
}

 \date{}

\maketitle

\begin{abstract}
A subset $C$ of the vertex set of a graph $\Gamma$ is called a perfect code of $\Gamma$ if every vertex of $\Gamma$ is at distance no more than one to exactly one vertex in $C$. In this paper, we classify all connected quintic Cayley graphs on abelian groups that admit a perfect code, and determine completely all perfect codes of such graphs.
\end{abstract}


{\em Keywords:} Perfect code; Cayley graph; abelian group

\medskip

{\em MSC 2010:} 05C25, 05C69, 94B25
\footnote{E-mail addresses: yangyf@cugb.edu.cn (Y. Yang), xuanlma@xsyu.edu.cn (X. Ma), qingz@mail.bnu.edu.cn (Q. Zeng).}
\section{Introduction}

A graph $\Gamma$ is a pair $(V\Gamma,E\Gamma)$ of vertex set $V\Gamma$ and edge set $E\Gamma$, where $E\Gamma$ is a subset of the set of $2$-element subsets of $V\Gamma$. In this paper, every group considered is finite, and
every graph considered is finite, simple and undirected.

Let $\Gamma=(V\Gamma,E\Gamma)$ be a graph. Two distinct vertices $x,y\in V\Gamma$ are called {\em adjacent} if the set $\{x,y\}\in E\Gamma$. The {\em distance} between two vertices $x,y\in V\Gamma$ is the length of a shortest path connecting $x$ and $y$ in $\Gamma$. For a positive integer $t$, a subset $C$ of $V\Gamma$ is called a {\em perfect $t$-code} \cite{Big,Kr} in $\Gamma$ if every vertex of $\Gamma$ is at distance no more than $t$ to exactly one vertex of $C$. In particular, $C$ is an independent set of $\Gamma$. A perfect 1-code is simply called a {\em perfect code}. In some references, a perfect code is also
called an {\em efficient dominating set} \cite{DYP,DYP2,DeS}
or {\em independent perfect domination set} \cite{Le}.

The notation of perfect $t$-codes in graphs were developed from the work in \cite{Big,Kr}, which in turn has a root in coding theory.  The Hamming distance between words of length $n$ over an alphabet of size $q\geq2$ is precisely the graph distance in the Hamming graph $H(n,q)$, and therefore perfect $t$-codes in $H(n,q)$ are exactly those in the classical setting \cite{MS} under the Hamming metric. It is well known that Hamming graphs are distance-transitive. This motivatied Biggs \cite{Big} to initiate an investigation of perfect codes in distance-transitive graphs. Since the seminal paper of Biggs \cite{Big} and the fundamental work of Delsarte \cite{DD}, perfect codes in distance-transitive graphs and, in general, in distance-regular graphs and association schemes have received considerable attention in the literature. See, for example, \cite{Ba,HS,Ze} and the survey papers \cite{He,Van}.

Perfect codes in Cayley graphs are another generalization of perfect codes in the
classical setting. This is so because $H(n,q)$ is the Cayley graph of the additive group $\mathbb{Z}_q^n$ with respect to the set of all elements of $\mathbb{Z}_q^n$ with exactly one nonzero coordinate. In general, for a group $G$ with identity element $e$ and an {\em inverse-closed} subset $S$ of $G\setminus\{e\}$ (that is, $S^{-1}:=\{x^{-1}\mid x\in S\}=S$), the {\em Cayley graph} $\Cay(G,S)$ of $G$ with respect to the {\em connection set} $S$ is defined to be the graph with vertex set $G$ such that $x,y\in G$ are adjacent whenever $yx^{-1}\in S$.
A perfect $t$-code in a Cayley graph can be viewed as a tiling of the underlying group by the balls of radius $t$ centered at the vertices of $C$ with respect to the graph distance. Moreover, if $X$ is a group with $q$ elements, $G=X^n$ is the direct product of $n$ copies of $X$, and $S$ consists of those elements of $G$ with exactly one nonidentity coordinate, then $\Cay(G,S)$ is isomorphic to $H(n,q)$, and each subgroup of $G$ is a group code. So the notion of perfect codes in Cayley graphs can be also thought as a
generalization of the concept of perfect group codes \cite{Van}. Therefore the study of perfect codes in Cayley graphs is highly related to both coding theory and group theory.

In the past a few years, perfect codes in Cayley graphs have attracted considerable attention. Lee proved that a Cayley graph on an abelian group has a perfect code if and only if it is a
covering graph of a complete graph \cite{Le}. Dejter and Serra gave a method for constructing countable families of Cayley graphs each one having a perfect code \cite{DeS}. Perfect codes in certain Cartesian products, where the
factors are Cayley graphs on an abelian group, are shown to exist by Mollard \cite{Mo}. In \cite{OPR}, cubic and quartic circulants (that is, Cayley graphs on cyclic groups) admitting a perfect code were classified. \c{C}al{\i}\c{s}kan, Miklavi\v{c} and \"{O}zkan characterized cubic and quartic Cayley
graphs on abelian groups that admit a perfect code \cite{CMO}. Kwon, Lee and Sohn gave necessary and sufficient conditions for the existence of perfect code of quintic circulants and classified these perfect codes \cite{KLS}. Caliskan, Miklavi\v{c} and \"{O}zkan classified the connected cubic Cayley graphs on
generalized dihedral groups which admit a perfect code \cite{CMO22}. For more about perfect codes in Cayley graphs, see for examples \cite{FHZ,Z15,HXZ18,CWZ,MWWZ,ZZ20,ZZ21}.

For two graphs $\Gamma$ and $\Sigma$, the {\em Cartesian product} $\Gamma\times\Sigma$ is the graph with vertex set $V\Gamma\times V\Sigma$ such that $(u_1,u_2)$ and $(v_1,v_2)$ are adjacent if and only if $\{u_1,v_1\}\in E\Gamma$ and $u_2=v_2$, or $u_1=v_1$ and $\{u_2,v_2\}\in E\Sigma$. For a positive integer $n$, denote by $K_{n}$ the complete graph ${\rm Cay}(\mathbb{Z}_{n},\mathbb{Z}_n\setminus\{0\})$.

For an integer $n$ and a prime $p$, let $\sigma_p(n)$ be the maximum nonnegative integer $i$ with $p^i\mid n$. In this paper, we study quintic Cayley
graphs on abelian groups that admit a perfect code, and recover the results in \cite{KLS}. The following theorem classifies all such graphs.

\begin{thm}\label{main}
A connected quintic Cayley graph on an abelian group admits a perfect code if and only if it is isomorphic to one of the following graphs:
\begin{itemize}
\item[{\rm (i)}] $\Gamma_{m,l,h}\times K_2$, where $\Gamma_{m,l,h}$ is from Construction \ref{construcution} with $\sigma_2(l)\neq0$ or $\sigma_2(m)>\sigma_2(l-ah)$;

\item[{\rm(ii)}] $\Gamma_{m,l,h}'$ in Construction \ref{cons-2}  with $\sigma_2(l)=0$ and $\sigma_2(m)=\sigma_2(l-ah)+1$, or $\sigma_2(h)\sigma_2(l)\neq0$ and $\sigma_2(m)\leq\sigma_2(l-ah)$;

\item[{\rm (iii)}] $\Gamma_{m,l,h}''$ in Construction \ref{cons-3} with $\sigma_2(h)\geq\sigma_2(m)>\sigma_2(l)\geq1$ or $\sigma_2(h)\geq\sigma_2(m)=\sigma_2(l)=1$.
\end{itemize}
Here, $0\leq h<m$, $0<l$, $6\mid m$ and $3\mid(l-ah)$ for some $a\in\{\pm1\}$.
\end{thm}

For the rest of this paper, we always assume that $G$ is an abelian group with identity $0$, written additively. For $a\in G$, let $o(a)$ denote the {\em order} of $a$, that is, the smallest positive integer $m$ such that $ma=0$. An element $a$ is called an {\em involution} if $o(a)=2$. To determine all perfect codes of a Cayley graph, we only need to consider all perfect codes containing identity since Cayley graphs are vertex transitive. The following theorem determines all perfect codes containing identity of a connected quintic Cayley graph on an abelian group.

\begin{thm}\label{main2}
With the notations in Theorem \ref{main}, let ${\rm Cay}(G,S)$ be connected, where $S=\{\pm s,\pm s',s_0\}$ with $o(s)=m$ and $o(s_0)=2$. Suppose that ${\rm Cay}(G,S)$ admits a perfect code. Then one of the following holds:
\begin{itemize}
\item[{\rm(i)}] if ${\rm Cay}(G,S)$ is isomorphic to one of the graphs in Theorem \ref{main} (i), (ii) with $\sigma_2(l)=0$, and (iii) with $\sigma_2(h)\geq\sigma_2(m)=\sigma_2(l)=1$, then all perfect codes containing identity are exactly
\begin{align}
\bigcup_{r=0}^{{\rm gcd}(l-ah,m)/3-1}D^{a}(r,t_r)~{\rm for~all}~t_r\in\{0,1\}~{\rm with}~r\geq1;\nonumber
\end{align}

\item[{\rm(ii)}] if ${\rm Cay}(G,S)$ is isomorphic to one of the graphs in Theorem \ref{main} (ii) with $\sigma_2(l)\neq0$, and (iii) with  $\sigma_2(h)\geq\sigma_2(m)>\sigma_2(l)\geq1$, then all perfect codes containing identity are exactly
\begin{align}
\bigcup_{r=0}^{{\rm gcd}(l-ah,m)/6-1}D^{a}(r,t_r)~{\rm for~all}~t_r\in\{0,1\}~{\rm with}~r\geq 1.\nonumber
\end{align}
\end{itemize}
Here, $D^{a}(r,t_r)=\{(3r+aj)s+js'+(j+t_r)s_0\mid j\in\mathbb{Z}\}$ and $t_0=0$.
\end{thm}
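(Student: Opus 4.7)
The key structural observation is that the proposed code sets admit a clean reformulation: since $(3i+ar)s + rs' + (r+j_i)s_0 = 3is + j_i s_0 + r(as+s'+s_0)$, we have
\[
D^a(i,j_i) = (3is + j_i s_0) + H_a, \qquad H_a := \langle as+s'+s_0\rangle.
\]
Thus every proposed perfect code $C = \bigcup_i D^a(i,j_i)$ is a union of cosets of the cyclic subgroup $H_a \le G$, with $D^a(0,0) = H_a$ itself, so automatically $0 \in C$.

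For the sufficiency direction, I would verify case-by-case, following the three graph families of Theorem~\ref{main}, that each $C$ of the stated form is a perfect code. Since $|N[0]| = 6$, it suffices to check $|C| = |G|/6$ together with $S$-independence of $C$ (no two distinct elements of $C$ differ by an element of $S$); these two conditions together force $\{c + N[0] : c \in C\}$ to partition $G$. The size count reduces to computing $o(as+s'+s_0)$ from the canonical presentation in Theorem~\ref{main} and multiplying by the claimed number of cosets, using the divisibility hypotheses $6 \mid m$ and $3 \mid (l-ah)$. Independence amounts to verifying $H_a \cap S = \emptyset$ (differences within one coset) together with the coset-difference conditions $3(i-i')s + (j_i - j_{i'})s_0 \notin S + H_a$ for $(i,j_i) \neq (i',j_{i'})$, all of which reduce to straightforward arithmetic in the underlying direct product.

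For the necessity direction, suppose $C$ is a perfect code containing $0$. I would propagate from $0$ by determining the closest nontrivial element of $C$. Every $c \in C \setminus \{0\}$ must lie outside $(S \cup \{0\}) + (S \cup \{0\})$ (equivalently, $N[0] \cap N[c] = \emptyset$). Among the elements satisfying this condition that are closest to $0$, only $as + s' + s_0$ for $a \in \{\pm 1\}$ is consistent with extending $C$ to a valid perfect code, with the sign $a$ pinned down by the parity arithmetic of Theorem~\ref{main} (and either value of $a$ is possible for some graphs, both for others, accounting for the index set $I$). Hence $as + s' + s_0 \in C$, and iterating by translating along $as+s'+s_0$ yields $H_a \subseteq C$. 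The remaining cosets are determined by analyzing the next-nearest permissible elements: the candidates for a second coset representative are $3s$ and $3s + s_0$, corresponding precisely to the choice $j_1 \in \{0,1\}$, and so on. Exhausting all cosets of $H_a$ in this way gives the stated union form.

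The main obstacle will be the case analysis needed to pin down which $a \in \{\pm 1\}$ actually admit perfect codes and which subcase (i) or (ii) of Theorem~\ref{main2} applies in each graph family of Theorem~\ref{main}. This is governed by the parities $\sigma(h), \sigma(l), \sigma(m)$, which control the order of $as+s'+s_0$ and hence whether the number of cosets is $\gcd(l-ah,m)/3$ or $\gcd(l-ah,m)/6$. Matching these parity conditions to the three graph families, and ruling out all non-canonical perfect codes in each, accounts for the bulk of the technical work.
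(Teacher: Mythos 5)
Your structural reformulation $D^a(i,j_i)=(3is+j_is_0)+H_a$ with $H_a=\langle as+s'+s_0\rangle$ is exactly the skeleton underlying the paper's argument, and your necessity outline (propagate along the diagonal $as+s'+s_0$, then along steps of $3s$ with a choice of $j_i\in\{0,1\}$) follows the same route as Lemmas \ref{jb3}, \ref{tongyi} and \ref{jb4}. However, your sufficiency criterion is wrong as stated: in a $5$-regular graph, $|C|=|G|/6$ together with independence of $C$ does \emph{not} force $\{c+N[0]:c\in C\}$ to partition $G$. Two codewords at distance two have overlapping closed neighbourhoods, so you must additionally exclude $c-c'\in S+S$ for distinct $c,c'\in C$; for instance, in $\Cay(\mathbb{Z}_6,\{\pm1\})$ the set $\{0,2\}$ is independent and has size $|G|/3$, yet it is not a perfect code. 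Supplying exactly this missing verification is what Propositions \ref{(iv)}, \ref{(v)} and \ref{(vi)} do, by exhibiting for every non-codeword its unique neighbour in $C$; your two conditions alone do not suffice.

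On the necessity side, the steps you assert are where essentially all of the work lies, and neither is routine. That the codeword nearest to $0$ must be $as+s'+s_0$ (rather than some other element outside $(S\cup\{0\})+(S\cup\{0\})$) and that the sign $a$ can be chosen consistently across all of $D$ are the content of Lemmas \ref{jb3} and \ref{tongyi}, each of which requires a substantial case analysis eliminating alternative local configurations; your proposal gives no argument for either. More importantly, the number of cosets, $\gcd(l-ah,m)/3$ versus $\gcd(l-ah,m)/6$, is not simply read off from $o(as+s'+s_0)$: the paper must identify the subgroup $H=\{is\in\langle s\rangle: x(aj,j,j)=x(i,0,k)\ \textrm{for some integers}\ j\ \textrm{and}\ k\in\{0,1\}\}$, which comes out as $\langle(l-ah)s\rangle$ or $\langle(l-ah)s/2\rangle$ depending on how $s_0$ sits inside $\langle s,s'\rangle$ and on the parities $\sigma(h),\sigma(l),\sigma(m)$. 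That determination is the bulk of the paper's proof of Theorem \ref{main2} (its Cases 1--3) and is only gestured at in your final paragraph, so as written the proposal is an outline of the correct strategy rather than a proof.
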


The paper is organized as follows. 
In Section 2, we construct some infinite families of  Cayley graphs on abelian groups admitting a perfect code. In Section 3, we give the proofs of Theorems \ref{main} and \ref{main2}.

\section{Constructions}

In this section, we construct three infinite families of quintic Cayley graphs on abelian groups admitting a perfect code. The main ideas for the constructions are taken from \cite[Construction 3]{YYF16}.

In the remainder of this section, we always assume that $m$ and $l$ are positive integers and $h$ is a nonnegative integer less than $m$. For a positive integer $n$, denote by $[n]$ the set $\{0,1,\ldots,n-1\}$.

\begin{constr}\label{construcution}
Let $\Gamma_{m,l,h}$ be the graph with the vertex set $\mathbb{Z}_m\times[l]$ whose edge set consists of $\{(a,b),(a+1,b)\}$, $\{(a,c),(a,c+1)\}$ and $\{(a,-1),(a-h,0)\}$, where $a\in\mathbb{Z}_m$, $b\in[l]$ and $c\in[l-1]$. See Figure \ref{fig:InformativeFigure}.
\end{constr}

For each integer $i$, let $\overline{i}$ denote the residue class $i+n\mathbb{Z}_n$, and $\hat{i}$ be the minimal nonnegative integer in $\overline{i}$. In $\mathbb{Z}_n$, if no confusion occurs, we write $i$ instead of $\overline{i}$.

\begin{remark}\label{h,l}
Let $j\in\mathbb{Z}$. Then there exist $n\in\mathbb{Z}$ and $r\in [l]$ such that $j=nl+r$. By Construction \ref{construcution}, we may use $(i,j)$ to denote the vertex $(\hat{i}-nh,r)$ of $V\Gamma_{m,l,h}$ for all $i\in\mathbb{Z}_m$.
\end{remark}

\begin{figure}[!h]
  \begin{center}
     \includegraphics{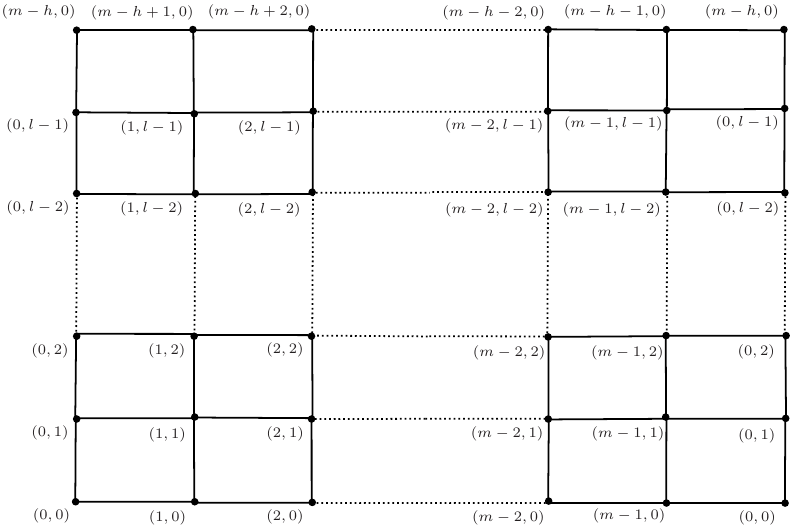}
  \end{center}
  \caption{\label{fig:InformativeFigure} The graph $\Gamma_{m,l,h}$.}
\end{figure}



For an integer $n$, let $P(n)$ be the set consisting of all prime factors of $n$. For a finite set $X$ consisting of primes, let $G_p$ be an abelian $p$-group for $p\in X$. Denote $G=\prod_{p\in X}G_p$. For an element $\alpha\in G$ and a prime $p\in X$, let $\alpha^{(p)}$ be the image of the projection from $G$ to $G_p$.

\begin{pro}\label{cayley}
Let $f_p(l,m,h)=\min\{\sigma_p(l),\sigma_p({\rm gcd}(m,h))\}$. The digraph $\Gamma_{m,l,h}$ is isomorphic to $\Cay(\prod_{p\in P(ml)}\mathbb{Z}_{p^{f_p(l,m,h)}}\times\mathbb{Z}_{p^{\sigma_p(ml)-f_p(l,m,h)}},\{\pm\alpha,\pm\beta\})$,
where
\begin{align}
\alpha^{(p)}&=\left\{
\begin{array}{ll}
(1,0), & \textrm{if}\ \sigma_p(l)>\sigma_p({\rm gcd}(m,h))=\sigma_p(m),\\
(1,l/{\rm gcd}(l,m-h)), & \textrm{if}\ \sigma_p(l)>\sigma_p({\rm gcd}(m,h))\neq\sigma_p(m),\\
(0,l/{\rm gcd}(l,m-h)), & \textrm{if}\ \sigma_p(l)\leq\sigma_p({\rm gcd}(m,h)),
\end{array} \right. \label{alpha}\\
\beta^{(p)}&=\left\{
\begin{array}{ll}
(0,1), & \textrm{if}\ \sigma_p(l)>\sigma_p({\rm gcd}(m,h))=\sigma_p(m),\\
(0,(m-h)/{\rm gcd}(l,m-h)), & \textrm{if}\ \sigma_p(l)>\sigma_p({\rm gcd}(m,h))\neq\sigma_p(m),\\
(1,(m-h)/{\rm gcd}(l,m-h)), & \textrm{if}\ \sigma_p(l)\leq\sigma_p({\rm gcd}(m,h)).
\end{array}\right.\label{beta}
\end{align}
\end{pro}
\begin{proof}
Let $\varphi$ be the mapping from $\mathbb{Z}_m\times[l]$ to $\prod_{p\in P(ml)}\mathbb{Z}_{p^{f_p(l,m,h)}}\times\mathbb{Z}_{p^{\sigma_p(ml)-f_p(l,m,h)}}$ such that
$\varphi(a,b)=\hat{a}\alpha+b\beta$. Note that $\varphi$ is well defined.

Now we show that $\varphi$ is injective. Let $\varphi(a,b)=\varphi(x,y)$ for $(a,b),(x,y)\in\mathbb{Z}_m\times[l]$. It follows that
\begin{align}
\hat{a}\alpha^{(p)}+b\beta^{(p)}=\hat{x}\alpha^{(p)}+y\beta^{(p)}~~{\rm for}~~p\in P(ml).\label{2.3}
\end{align}

We claim that $b\equiv y~({\rm mod}~p^{\sigma_p(l)})$ for all $p\in P(l)$. Suppose $\sigma_p(l)>\sigma_p({\rm gcd}(m,h))=\sigma_p(m)$ or $\sigma_p(l)\leq\sigma_p({\rm gcd}(m,h))$. By \eqref{alpha}--\eqref{2.3}, we have $b\equiv y~({\rm mod}~p^{\sigma_p(ml)-\sigma_p({\rm gcd}(m,h))})$ or $b\equiv y~({\rm mod}~p^{\sigma_p(l)})$. It follows that $b\equiv y~({\rm mod}~p^{\sigma_p(l)})$. Now suppose $\sigma_p(l)>\sigma_p({\rm gcd}(m,h))\neq\sigma_p(m)$.  Then $f_p(l,m,h)=\sigma_p({\rm gcd}(m,h))$ and $\sigma_p(h)<\sigma_p(m)$. By \eqref{alpha}--\eqref{2.3}, we have $\hat{a}\equiv \hat{x}~({\rm mod}~p^{\sigma_p({\rm gcd}(m,h))})$,
\begin{align}
\frac{(\hat{a}-\hat{x})l+(b-y)(m-h)}{{\rm gcd}(l,m-h)}\equiv0~({\rm mod}~p^{\sigma_p(ml)-\sigma_p({\rm gcd}(m,h))}).\label{eq-cayley-3}
\end{align}
Since $\sigma_p({\rm gcd}(m,h))=\sigma_p(h)$, one gets $\sigma_p({\rm gcd}(m,h))=\sigma_p(m-h)$. The fact $\hat{a}\equiv \hat{x}~({\rm mod}~p^{\sigma_p({\rm gcd}(m,h))})$ implies $\sigma_p({\rm gcd}(l,m-h))\leq\sigma_p(\hat{a}-\hat{x})$, and so $p^{\sigma_p(l)}\mid(\hat{a}-\hat{x})l/{\rm gcd}(l,m-h)$. \eqref{eq-cayley-3} implies $p^{\sigma_p(l)}\mid(b-y)(m-h)/{\rm gcd}(l,m-h)$. The fact $\sigma_p(l)>\sigma_p(m-h)$ implies $b\equiv y~({\rm mod}~p^{\sigma_p(l)})$. Thus, our claim is valid.

By the claim, we have $b\equiv y~({\rm mod}~l)$, and so $b=y$. By \eqref{2.3}, we have $\hat{a}\alpha=\hat{x}\alpha$. Pick $p\in P(m)$. If $\sigma_p(l)>\sigma_p({\rm gcd}(m,h))=\sigma_p(m)$, then $f_p(l,m,h)=\sigma_p(m)$, and so $\hat{a}\equiv\hat{x}~({\rm mod}~p^{\sigma_p(m)})$ from \eqref{alpha}. Suppose $\sigma_p(l)\leq\sigma_p({\rm gcd}(m,h))$. Then $f_p(l,m,h)=\sigma_p(l)$ and $p\nmid l/{\rm gcd}(l,m-h))$. By \eqref{alpha}, one has $p^{\sigma_p(m)}\mid (\hat{a}-\hat{x})l/{\rm gcd}(l,m-h)$, and so $p^{\sigma_p(m)}\mid \hat{a}-\hat{x}$. Suppose $\sigma_p(l)>\sigma_p({\rm gcd}(m,h))\neq\sigma_p(m)$. Then $\sigma_p(h)<\sigma_p(m)$ and $\sigma_p(m-h)=\sigma_p({\rm gcd}(m,h))$. It follows that $f_p(l,m,h)=\sigma_{p}({\rm gcd}(m,h))=\sigma_p({\rm gcd}(l,m-h))$. By \eqref{alpha} again, we have $p^{\sigma_p(ml)-\sigma_p({\rm gcd}(m,h))}\mid (\hat{a}-\hat{x})l/{\rm gcd}(l,m-h)$, and so $p^{\sigma_p(m)}\mid \hat{a}-\hat{x}$. We conclude that $p^{\sigma_p(m)}\mid \hat{a}-\hat{x}$ for all $p\in P(m)$. It follows that $\hat{a}\equiv\hat{x}~({\rm mod}~m)$, and so $a=x$.  Thus, $\varphi$ is injective.

Since $|V\Gamma_{m,l,h}|=\prod_{p\in P(ml)}|\mathbb{Z}_{p^{f_p(l,m,h)}}\times\mathbb{Z}_{p^{\sigma_p(ml)-f_p(l,m,h)}}|$, $\varphi$ is a bijection. One can verify that
$(x_{1},y_{1})$ and $(x_{2},y_{2})$ are adjacent if and only if $\varphi(x_{1},y_{1})$ and $\varphi(x_{2},y_{2})$ are adjacent. Thus, $\varphi$ is an isomorphism.
\end{proof}

Denote $\gamma(n)=(3+(-1)^n)/2$ for each integer $n$.

\begin{pro}\label{(iv)}
Let $6\mid m$ and $3\mid(l-ah)$ for some $a\in\{\pm1\}$. Suppose $\sigma_2(l)\neq0$ or $\sigma_2(m)>\sigma_2(l-ah)$. Then $\cup_{r\in[{\rm gcd}(l-ah,m)/3]}C^{a}(r,t_r)$ is a perfect code of $\Gamma_{m,l,h}\times K_2$ for $t_r\in\{0,1\}$, where $C^{a}(r,t_r)=\{(3r+aj,j,j+t_r)\mid j\in\mathbb{Z}\}$.
\end{pro}
\begin{proof}
For fixed integers $h,l,m$ and $a$, let $b={\rm gcd}(l-ah,m)$. Then $3\mid b$. Denote $C=\cup_{r\in[b/3]}C^{a}(r,t_r)$ for fixed $t_r$. If $2\mid l$, then $ah\equiv l~({\rm mod}~b)$; if $2\nmid l$, then $ah\equiv l+b~({\rm mod}~2b)$ since $\sigma_2(m)>\sigma_2(l-ah)$. Then $ah\equiv l+b~({\rm mod}~2b/\gamma(l))$.

By Remark \ref{h,l}, $l$ is the minimal positive integer such that $(i,l,k)\in\{(i',0,k')\mid (i',k')\in\mathbb{Z}_m\times\mathbb{Z}_2\}$ for each $(i,k)\in\mathbb{Z}_m\times\mathbb{Z}_2$. Note that $(3r+al-h,0,l+t_r)=(3r+al,l,l+t_r),(3r,0,t_r)\in C^a(r,t_r)$. Since the order of $al-h$ in $\mathbb{Z}_m$ is $m/b$, from Remark \ref{h,l}, we have $|C^a(r,t_r)|=l\cdot m/b$ for all $r\in[b/3]$.

Suppose $C^a(r,t_r)\cap C^a(s,t_{s})\neq\emptyset$ for distinct $r,s\in[b/3]$. Since $\Gamma_{m,l,h}\times K_2$ is vertex transitive from Proposition \ref{cayley}, we may assume $r=0$ and $(0,0,t_0)=(3s+anl,nl,nl+t_s)$ for some $n\in\mathbb{Z}$. By Remark \ref{h,l}, one has $(0,0,t_0)=(3s+n(al-h),0,nl+t_s)$. It follows that $3s\in(al-h)\mathbb{Z}+m\mathbb{Z}=b\mathbb{Z}$, contrary to the fact that $s\in[b/3]\setminus\{0\}$. Thus, $|C|=ml/3$.

We claim that $(i,j,k)\in C$ if and only if there exists $r\in[b/3]$ such that
\begin{align}
a(\hat{i}-3r)\equiv b(\hat{k}-t_r)+(b+1)j~({\rm mod}~2b/\gamma(l)),\quad \hat{k}\equiv j+t_r~({\rm mod}~\gamma(l)).\label{2.2}
\end{align}
The proof of necessity is trivial. Now suppose that \eqref{2.2} holds for some $r\in[b/3]$ and $(i,j,k)\in\mathbb{Z}_m\times[l]\times\mathbb{Z}_2$. If $2\mid l$, then $a(\hat{i}-3r)\equiv j~({\rm mod}~b)$ and $\hat{k}\equiv j+t_r~({\rm mod}~2)$, which imply that there are exactly $m/b$ elements $(i,k)\in\mathbb{Z}_m\times\mathbb{Z}_2$ satisfying \eqref{2.2} for each $j\in[l]$ and $r\in[b/3]$. Now we consider the case $2\nmid l$. Note that \eqref{2.2} is equivalent to $a(\hat{i}-3r)\equiv b(\hat{k}-t_r)+(b+1)j~({\rm mod}~2b)$. It follows that $\hat{i}\equiv3r+aj~({\rm mod}~b)$. If $\hat{i}\equiv3r+aj~({\rm mod}~2b)$, then $\hat{k}\equiv j+t_r~({\rm mod}~2)$; if $\hat{i}\equiv3r+aj+b~({\rm mod}~2b)$, then $\hat{k}\equiv j+t_r+1~({\rm mod}~2)$. This implies that there are exactly $m/b$ elements $(i,k)\in\mathbb{Z}_m\times\mathbb{Z}_2$ satisfying \eqref{2.2} for each $r\in[b/3]$ and $j\in[l]$. Therefore, there are exactly $ml/3$ elements in $\mathbb{Z}_m\times[l]\times\mathbb{Z}_2$ satisfying \eqref{2.2}. Since $|C|=ml/3$, from the necessity, our claim is valid.

One can verify that $C$ is an independent set. Let $(i,j,k)\in\mathbb{Z}_m\times[l]\times\mathbb{Z}_2\setminus C$. Then \eqref{2.2} does not hold for $r\in[b/3]$. If $2\mid l$, then $a(\hat{i}-3r)\equiv j+c~({\rm mod}~b)$ and $\hat{k}\equiv j+t_r+d~({\rm mod}~2)$ for some $r\in[b/3]$ with $(c,d)\in\{(\pm1,0),(b,1),(\pm(b+1),1)\}$; if $2\nmid l$, then $a(\hat{i}-3r)\equiv b(\hat{k}-t_r)+(b+1)j+c~({\rm mod}~2b)$ and $\hat{k}\equiv j+t_r+d~({\rm mod}~1)$ for some $r\in[b/3]$ with $(c,d)\in\{(\pm1,0),(b,1),(\pm(b+1),1)\}$. Thus,
\begin{align}
a(\hat{i}-3r)\equiv b(\hat{k}-t_r)+(b+1)j+c~({\rm mod}~2b/\gamma(l)),~\hat{k}\equiv j+t_r+d~({\rm mod}~\gamma(l))\label{2.2-1}
\end{align}
for some $r\in[b/3]$ with $(c,d)\in\{(\pm1,0),(b,1),(\pm(b+1),1)\}$.

By \eqref{2.2-1} and the claim, if $(c,d)=(\pm1,0)$, then $(i-1,j,k)$ or $(i+1,j,k)$ is the unique neighbor of $(i,j,k)$ in $C$; if $(c,d)=(b,1)$, then $(i,j,k+1)$ is the unique neighbor of $(i,j,k)$ in $C$.

Now we consider the case $(c,d)=(\pm(b+1),1)$. \eqref{2.2-1} implies $a(\hat{i}-3r)\equiv b(\hat{k}-t_r)+(b+1)(j\pm1)~({\rm mod}~2b/\gamma(l))$ and $\hat{k}\equiv j+1+t_r~({\rm mod}~\gamma(l))$. If $c=-b-1$ and $j\neq0$, or $c=b+1$ and $j\neq l-1$, from  the claim, then $(i,j-1,k)$ or $(i,j+1,k)$ is the unique neighbor of $(i,j,k)$ in $C$.

Suppose $c=-b-1$ and $j=0$. By \eqref{2.2-1}, we have $a(\hat{i}-3r)\equiv b(\hat{k}-t_r)-b-1~({\rm mod}~2b/\gamma(l))$ and $\hat{k}\equiv t_r+1~({\rm mod}~\gamma(l))$. Since $ah\equiv l+b~({\rm mod}~2b/\gamma(l))$, we have
\begin{align}
a(\hat{i}+h-3r)\equiv  b(\hat{k}-t_r)+l-1\equiv b(\hat{k}-t_r)+(b+1)(l-1)~({\rm mod}~2b/\gamma(l)).\nonumber
\end{align}
Since $\hat{k}\equiv l-1+t_r~({\rm mod}~\gamma(l))$, from the claim, $(i+h,l-1,k)$ is the unique neighbor of $(i,0,k)$ in $C$.

Suppose $c=b+1$ and $j=l-1$. By \eqref{2.2-1}, we have $a(\hat{i}-3r)\equiv b(\hat{k}-t_r)+(b+1)l\equiv b(\hat{k}-t_r)+b+l~({\rm mod}~2b/\gamma(l))$ and $\hat{k}\equiv t_r+1~({\rm mod}~\gamma(l))$. Since $ah\equiv l+b~({\rm mod}~2b/\gamma(l))$, we have $a(\hat{i}-h-3r)\equiv b(\hat{k}-t_r)~({\rm mod}~2b/\gamma(l))$. The claim implies that $(i-h,0,k)$ is the unique neighbor of $(i,l-1,k)$ in $C$.

This completes the proof of this proposition.
\end{proof}

\begin{constr}\label{cons-2}
Let $2\mid m$. We define the graph $\Gamma_{m,l,h}'$ with vertex set $V\Gamma_{m,l,h}$ such that $(a,b)$ and $(a',b')$ are adjacent if and only if $\{(a,b),(a',b')\}\in E\Gamma_{m,l,h}$, or $a'=a+m/2$ and $b'=b$.
\end{constr}

The following corollary is immediate from Proposition \ref{cayley}.

\begin{cor}\label{cayley-2}
With the notations in Proposition \ref{cayley}, suppose $2\mid m$. Then $\Gamma_{m,l,h}'$ is isomorphic to $\Cay(\prod_{p\in P(ml)}\mathbb{Z}_{p^{f_p(l,m,h)}}\times\mathbb{Z}_{p^{\sigma_p(ml)-f_p(l,m,h)}},\{\pm\alpha,\pm\beta,m\alpha/2\})$.
\end{cor}

\begin{pro}\label{(v)}
Let $6\mid m$ and $3\mid(l-ah)$ for some $a\in\{\pm1\}$. Suppose $\sigma_2(l)=0$ and $\sigma_2(m)=\sigma_2(l-ah)+1$, or $\sigma_2(h)\sigma_2(l)\neq0$ and $\sigma_2(m)\leq\sigma_2(l-ah)$. Then $\cup_{r\in[{\rm gcd}(l-ah,m)/3\gamma(l)]}C^{a}(r,t_r)$ is a perfect code of $\Gamma_{m,l,h}'$ for $t_r\in\{0,1\}$, where
\begin{align}
C^{a}(r,t_r)=\{(3r+aj+(j+t_r)m/2,j)\mid j\in\mathbb{Z}\}.\nonumber
\end{align}
\end{pro}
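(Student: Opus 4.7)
The plan is to adapt the proof of Proposition~\ref{(iv)} to the graph $\Gamma_{m,l,h}'$. The key observation is that, compared with $\Gamma_{m,l,h}\times K_2$, the new edge $(a,b)\sim(a+m/2,b)$ of $\Gamma_{m,l,h}'$ plays the role of the $K_2$-factor edge, while the binary coordinate of the $K_2$ component has been folded into the first component of $\mathbb{Z}_m\times\mathbb{Z}_l$ through the shift $(j+t_r)m/2\in\{0,m/2\}$. I would therefore transplant the three-step skeleton of that proof: count $|C|$, derive a closed-form membership characterization, and verify that every vertex outside $C$ has a unique neighbor in $C$. Setting $b=\gcd(l-ah,m)$, the assumptions $3\mid m$, $3\mid(l-ah)$ together with the parity factor $\alpha(l)$ yield $b/(3\alpha(l))$ pairwise disjoint sets $C^a(r,t_r)$ with total cardinality $ml/6$, the required size of a perfect code in a $5$-regular graph on $ml$ vertices.

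Next, I would produce a membership characterization of the form $\hat{i}-3r-(\hat{j}+t_r)m/2\equiv a\hat{j}\pmod{b'}$, where $b'=b$ if $2\mid l$ and $b'=2b$ if $2\nmid l$, and where $r\in\{0,\ldots,b/(3\alpha(l))-1\}$ is uniquely determined by $(i,j)$. The verification then splits by the parity of $l$. When $2\mid l$ (so that $\sigma(h)\sigma(l)\neq 0$ and $\sigma(m)\leq\sigma(l-ah)$), the analysis mimics Case~1 of Proposition~\ref{(iv)}, with the $\mathbb{Z}_2$-information now carried by the value of the $m/2$-contribution to $\hat{i}$ instead of by an auxiliary coordinate. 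When $2\nmid l$ (so that $\sigma(l)=0$ and $\sigma(m)=\sigma(l-ah)+1$), the assumption gives $l-ah\equiv b\pmod{2b}$, exactly as in Case~2 of Proposition~\ref{(iv)}, and all congruences are computed modulo $2b$ to correctly track the parity of the $m/2$ shift. In either case, one checks independence of $C$ and, for each $(i,j)\notin C$, identifies the unique neighbor in $C$ among the five candidates $(i\pm 1,j)$, $(i,j\pm 1)$ (replaced by $(i+h,l-1)$ or $(i-h,0)$ at the boundary $\hat{j}\in\{0,l-1\}$), and $(i+m/2,j)$.

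The main obstacle will be the interaction of the boundary wrap-around with the new $m/2$-edge: a shift by $h$ in the first coordinate can flip the parity of the $m/2$-contribution, so the unique neighbor predicted by the congruence at an interior vertex must be reconciled with the boundary identification $\{(i,l-1),(i-h,0)\}$ when $\hat{j}\in\{0,l-1\}$. This is precisely where the $\sigma$-hypotheses enter: they guarantee that, after the wrap-around shift by $h$, the residue of the relevant expression modulo $b'$ lands in the correct class, so that $(i+h,l-1)$ or $(i-h,0)$ does provide the genuine unique neighbor in $C$, mirroring the role played by the identity $l-ah\equiv b\pmod{2b}$ in Case~2 of the proof of Proposition~\ref{(iv)}.
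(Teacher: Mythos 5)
Your proposal follows essentially the same route as the paper's proof: the cardinality count $|C|=ml/6$, a membership congruence equivalent to the paper's $\hat{i}-3r-bt_r/\alpha(l)\equiv(b/\alpha(l)+a)\hat{j}\pmod{2b/\alpha(l)}$ (your modulus $b'$ is exactly $2b/\alpha(l)$, and $(\hat{j}+t_r)m/2$ reduces to $(\hat{j}+t_r)b/\alpha(l)$ under the hypotheses), and the case-by-case identification of the unique neighbor among $(i\pm1,j)$, $(i,j\pm1)$ with the $h$-shifted boundary wrap-around, and $(i+m/2,j)$. The only difference is presentational: you split explicitly on the parity of $l$, whereas the paper treats both cases uniformly through the factor $\alpha(l)$.
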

\begin{proof}
For fixed $h,l,m$ and $a$, let $b={\rm gcd}(l-ah,m)$. Then $3\mid b$. Denote $C=\cup_{r\in[b/3\gamma(l)]}C^{a}(r,t_r)$ for fixed $r\in[b/3\gamma(l)]$.

If $\sigma_2(l)=0$, then $\sigma_2(m)=\sigma_2(l-ah)+1$, which implies $m/2\equiv l-ah\equiv b~({\rm mod}~2b)$; if $\sigma_2(l)\neq0$, then $\sigma_2(m)\leq\sigma_2(l-ah)$, which implies $m/2\equiv b/2~({\rm mod}~b)$ and $l\equiv ah~({\rm mod}~b)$. Thus, $m/2\equiv b/\gamma(l)~({\rm mod}~2b/\gamma(l))$ and $l-ah\equiv b~({\rm mod}~2b/\gamma(l))$.

By Remark \ref{h,l} and Construction \ref{cons-2}, $l$ is the minimal positive integer such that $(i,l)\in\{(i',0)\mid i'\in\mathbb{Z}_m\}$ for each $i\in\mathbb{Z}_m$. Note that $(3r+al-h+(\gamma(l)+t_r)m/2,0)=(3r+al+(l+t_r)m/2,l),(3r+t_rm/2,0)\in C^a(r,t_r)$ for $r\in[b/3\gamma(l)]$. Observe that the order of $al-h+\gamma(l)m/2$ in $\mathbb{Z}_m$ is $m/{\rm gcd}(m,al-h+\gamma(l)m/2)$. If $2\mid l$, then $m/{\rm gcd}(m,al-h+\gamma(l)m/2)=m/b$; if $2\nmid l$, then $\sigma_2(m)=\sigma_2(l-ah)+1$, and so $\sigma_2(al-h+m/2)\geq\sigma_2(m)=\sigma_2(b)+1$, which implies $m/{\rm gcd}(m,al-h+m/2)=m/2b$. Then $|C^a(r,t_r)|=lm/{\rm gcd}(m,al-h+\gamma(l)m/2)=\gamma(l)ml/2b$.

Suppose $C^a(r,t_r)\cap C^a(s,t_{s})\neq\emptyset$ for distinct $r,s\in[b/3\gamma(l)]$. Since $\Gamma_{m,l,h}'$ is vertex transitive from Corollary \ref{cayley-2}, we may assume $r=0$ and $(t_0m/2,0)=(3s+anl+(nl+t_s)m/2,nl)$ for some $n\in\mathbb{Z}$. By Remark \ref{h,l}, we have $(t_0m/2,0)=(3s+n(al-h)+(nl+t_s)m/2,0)$, which implies $3s\in(al-h)\mathbb{Z}+(m/2)\mathbb{Z}=(b/\gamma(l))\mathbb{Z}$, contrary to the fact that $s\in[b/3\gamma(l)]\setminus\{0\}$. Thus, $|C|=ml/6$.

We claim that $(i,j)\in C$ if and only if there exists $r\in[b/3\gamma(l)]$ such that
\begin{align}
\hat{i}-3r-b t_r/\gamma(l)\equiv(b/\gamma(l)+a)j~({\rm mod}~2b/\gamma(l)).\label{2.6}
\end{align}
Since $m/2\equiv b/\gamma(l)~({\rm mod}~2b/\gamma(l))$, the proof of necessity is valid. Now suppose that \eqref{2.6} holds for some $r\in[b/3\gamma(l)]$ and $(i,j)\in\mathbb{Z}_m\times[l]$. Then $\hat{i}\equiv3r+aj~({\rm mod}~b/\gamma(l))$. If $\hat{i}\equiv3r+aj~({\rm mod}~2b/\gamma(l))$, then $j\equiv t_r~({\rm mod}~2)$; if $\hat{i}-3r-aj\equiv b/\gamma(l)~({\rm mod}~2b/\gamma(l))$, then $j\equiv t_r+1~({\rm mod}~2)$. Hence, there are exactly $m\gamma(l)/2b$ elements $i\in\mathbb{Z}_m$ satisfying \eqref{2.6} for each $r\in[b/3\gamma(l)]$ and $j\in[l]$.  Therefore, there are exactly $ml/6$ elements in $\mathbb{Z}_m\times[l]$ satisfying \eqref{2.6}. Since $|C|=ml/6$, from the necessity, our claim is valid.

Since $m/2\equiv b/\gamma(l)~({\rm mod}~2b/\gamma(l))$, $C$ is an independent set. Let $(i,j)\in\mathbb{Z}_m\times[l]\setminus C$. Then \eqref{2.6} does not hold for $r\in[b/3\gamma(l)]$. It follows that
\begin{align}
\hat{i}-3r-b t_r/\gamma(l)\equiv(b/\gamma(l)+a)j+c~({\rm mod}~2b/\gamma(l))\label{2.6-1}
\end{align}
for some $r\in[b/3\gamma(l)]$ with $c\in\{\pm1,b/\gamma(l),\pm(b/\gamma(l)+a)\}$.

By \eqref{2.6-1} and the claim, if $c=\pm1$, then $(i-1,j)$ or $(i+1,j)$ is the unique neighbor of $(i,j)$ in $C$; if $c=b/\gamma(l)$, then $(i+m/2,j)$ is the unique neighbor of $(i,j)$ in $C$ since $m/2\equiv b/\gamma(l)~({\rm mod}~2b/\gamma(l))$.

Now we consider the case $c=\pm(b/\gamma(l)+a)$. If $c=-(b/\gamma(l)+a)$ and $j\neq0$, or $c=b/\gamma(l)+a$ and $j\neq l-1$, by \eqref{2.6-1}, then $\hat{i}-3r-b t_r/\gamma(l)\equiv(b/\gamma(l)+a)(j\mp1)~({\rm mod}~2b/\gamma(l))$, which implies that $(i,j-1)$ or $(i,j+1)$ is the unique neighbor of $(i,j)$ in $C$ from the claim.

Suppose $c=-(b/\gamma(l)+a)$ and $j=0$. By \eqref{2.6-1}, we have $\hat{i}-3r-bt_r/\gamma(l)\equiv-(b/\gamma(l)+a)~({\rm mod}~2b/\gamma(l))$. Since $l-ah\equiv b\equiv bl/\gamma(l)~({\rm mod}~2b/\gamma(l))$, one gets $\hat{i}+h-3r-bt_r/\gamma(l)\equiv (a+b/\gamma(l))l-(b/\gamma(l)+a)~({\rm mod}~2b/\gamma(l))$. The claim implies that $(i+h,l-1)$ is the unique neighbor of $(i,0)$ in $C$.

Suppose $c=b/\gamma(l)+a$ and $j=l-1$. By \eqref{2.6-1}, we have $\hat{i}-3r-b t_r/\gamma(l)\equiv bl/\gamma(l)+al~({\rm mod}~2b/\gamma(l))$. Since $l-ah\equiv bl/\gamma(l)~({\rm mod}~2b/\gamma(l))$, one gets
$\hat{i}-h-3r-bt_r/\gamma(l)\equiv0~({\rm mod}~2b/\gamma(l))$. The claim implies that $(i-h,0)$ is the unique neighbor of $(i,l-1)$ in $C$.

This completes the proof of this proposition.
\end{proof}

\begin{constr}\label{cons-3}
Let $\sigma_2(h)\geq\sigma_2(m)\geq1$ and $\sigma_2(l)\geq1$. We define the graph $\Gamma_{m,l,h}''$ with vertex set $V\Gamma_{m,l,h}$ such that $(a,b)$ and $(a',b')$ are adjacent if and only if $\{(a,b),(a',b')\}\in E\Gamma_{m,l,h}$, or $a'=a+(m+h-{\rm lcm}(h,m))/2$ and $b'=b+l/2$.
\end{constr}

The following corollary is also immediate from Proposition \ref{cayley}.

\begin{cor}\label{cayley-3}
With the notations in Proposition \ref{cayley}, suppose $\sigma_2(h)\geq\sigma_2(m)\geq1$ and $\sigma_2(l)\geq1$. Then $\Gamma_{m,l,h}''$ is isomorphic to $$\Cay(\prod_{p\in P(ml)}\mathbb{Z}_{p^{f_p(l,m,h)}}\times\mathbb{Z}_{p^{\sigma_p(ml)-f_p(l,m,h)}},\{\pm\alpha,\pm\beta,((m+h-{\rm lcm}(h,m))\alpha+l\beta)/2\}).$$
\end{cor}

\begin{pro}\label{(vi)}
Let $6\mid m$ and $3\mid(l-ah)$ for some $a\in\{\pm1\}$. Suppose $\sigma_2(h)\geq\sigma_2(m)=\sigma_2(l)=1$ or $\sigma_2(h)\geq\sigma_2(m)>\sigma_2(l)\geq1$. Then $\cup_{r\in[{\rm gcd}(l-ah,m)/3\gamma(m/2)]}C^{a}(r,t_r)$ is a perfect code of $\Gamma_{m,l,h}''$ for $t_r\in\{0,1\}$, where $$C^{a}(r,t_r)=\{(3r+aj+(j+t_r)(m+h-{\rm lcm}(h,m))/2,j+(j+t_r)l/2)\mid j\in\mathbb{Z}\}.$$
\end{pro}
\begin{proof}
For fixed integers $h,l,m$ and $a$, let $b={\rm gcd}(l-ah,m)$. Then $3\mid b$. Denote $C=\cup_{r\in[b/3\gamma(m/2)]}C^{a}(r,t_r)$ for fixed $t_r$.

\begin{step}\label{step-0}
If $\sigma_2(l)=1$, then ${\rm gcd}(m,(al-h+m+{\rm lcm}(h,m))/2)=b/\gamma(m/2)$.
\end{step}

Suppose $\sigma_2(h)>\sigma_2(m)$. Then $2m\mid {\rm lcm}(h,m)$. It follows that ${\rm gcd}(m,(al-h+m+{\rm lcm}(h,m))/2)={\rm gcd}(m,(al-h+m)/2)$. If $\sigma_2(m)=1$, then $\sigma_2(al-h-m)>1$, and so  ${\rm gcd}(m,(al-h+m)/2)=b$. If $\sigma_2(m)>1$, then $\sigma_2(al-h-m)=1$, and so ${\rm gcd}(m,(al-h+m)/2)=b/2$. Then ${\rm gcd}(m,(al-h+m)/2)=b/\gamma(m/2)$.

Suppose $\sigma_2(h)=\sigma_2(m)$. Then $2m\mid {\rm lcm}(h,m)+m$. It follows that ${\rm gcd}(m,(al-h+m+{\rm lcm}(h,m))/2)={\rm gcd}(m,(al-h)/2)$. If $\sigma_2(m)=1$, then $\sigma_2(al-h)>1$, and so ${\rm gcd}(m,(al-h)/2)=b$. If $\sigma_2(m)>1$, then $\sigma_2(al-h)=1$, and so ${\rm gcd}(m,(al-h)/2)=b/2$. Then ${\rm gcd}(m,(al-h)/2)=b/\gamma(m/2)$.

\begin{step}\label{step-1}
$|C|=ml/6$.
\end{step}

{\bf Case 1.} $\sigma_2(l)=1$.

Since $2\mid l/2+1$, from Remark \ref{h,l}, $l/2$ is the minimal positive integer $j$ with $(i,j+(j+t_r)l/2)\in\{(i',t_rl/2)\mid i'\in\mathbb{Z}_m\}$ for $i\in\mathbb{Z}_m$. Note that $(3r+(al-h+m-{\rm lcm}(h,m))/2+t_r(m+h-{\rm lcm}(h,m))/2,t_rl/2)=(3r+al/2+(l/2+t_r)(m+h-{\rm lcm}(h,m))/2,l/2+(l/2+t_r)l/2)\in C^a(r,t_r)$. Note that $(3r+t_r(m+h-{\rm lcm}(h,m))/2,t_rl/2)\in C^a(r,t_r)$. Since the order of $(al-h+m-{\rm lcm}(h,m))/2$ in $\mathbb{Z}_m$ is $m/{\rm gcd}(m,(al-h+m+{\rm lcm}(h,m))/2)$, from Step \ref{step-0}, we have $|C^a(r,t_r)|=(l/2)\cdot m\gamma(m/2)/b$.

Suppose $C^a(r,t_r)\cap C^a(s,t_{s})\neq\emptyset$ for distinct $r,s\in[b/3\gamma(m/2)]$. Since $\Gamma_{m,l,h}''$ is vertex transitive from Corollary \ref{cayley-3}, we may assume $r=0$ and $(t_0(m+h-{\rm lcm}(h,m))/2,t_0l/2)=(3s+anl/2+(nl/2+t_s)(m+h-{\rm lcm}(h,m))/2,nl/2+(nl/2+t_s)l/2)$ for some $n\in\mathbb{Z}$. This implies $(t_0(m+h-{\rm lcm}(h,m))/2,t_0l/2)=(3s+n(al-h+m-{\rm lcm}(h,m))/2+t_s(m+h-{\rm lcm}(h,m))/2,t_sl/2)$. It follows that $t_s=t_0$, and so $3s\in ((al-h+m+{\rm lcm}(h,m))/2)\mathbb{Z}+m\mathbb{Z}$. By Step \ref{step-0}, one gets $3s\in b/\gamma(m/2)\mathbb{Z}$, contrary to the fact that $s\in [b/3\gamma(m/2)]\setminus\{0\}$. Thus, $|C|=ml/6$.

{\bf Case 2.} $\sigma_2(l)>1$.

By Remark \ref{h,l}, $l$ is the minimal positive integer $j$ with $(i,j+(j+t_r)l/2)\in\{(i',t_rl/2)\mid i'\in\mathbb{Z}_m\}$ for each $i\in\mathbb{Z}_m$. Since $\sigma_2(h)\geq\sigma_2(m)>1$, we have $(3r+al-h+t_r(m+h-{\rm lcm}(h,m))/2,t_rl/2)=(3r+al+(l+t_r)(m+h-{\rm lcm}(h,m))/2,l+(l+t_r)l/2)\in C^a(r,t_r)$ for $r\in[b/6]$. Since $(3r+t_r(m+h-{\rm lcm}(h,m))/2,t_rl/2)\in C^a(r,t_r)$ and the order of $al-h$ in $\mathbb{Z}_m$ is $m/{\rm gcd}(m,al-h)$, one gets $|C^a(r,t_r)|=ml/b$.

Note that $\sigma_2(m)>1$. Suppose $C^a(r,t_r)\cap C^a(s,t_{s})\neq\emptyset$ for distinct $r,s\in[b/6]$. Since $\Gamma_{m,l,h}''$ is vertex transitive from Corollary \ref{cayley-3}, we may assume $r=0$ and $(t_0(m+h-{\rm lcm}(h,m))/2,t_0l/2)=(3s+anl+(nl+t_s)(m+h-{\rm lcm}(h,m))/2,nl+(nl+t_s)l/2)$ for some $n\in\mathbb{Z}$. It follows that  $(t_0(m+h-{\rm lcm}(h,m))/2,t_0l/2)=(3s+n(al-h)+t_s(m+h-{\rm lcm}(h,m))/2,t_sl/2)$. Then, $t_s=t_0$, and so $3s\in(al-h)\mathbb{Z}+m\mathbb{Z}=b\mathbb{Z}$, contrary to the fact that $s\in[b/6]\setminus\{0\}$. Thus, $|C|=ml/6$.

\begin{step}\label{step-2}
$(m+h-al)/2+b/\gamma(m/2)\equiv {\rm lcm}(h,m)/2~({\rm  mod}~b\gamma(l/2)/\gamma(m/2))$.
\end{step}

Suppose $\sigma_2(h)\geq\sigma_2(m)=\sigma_2(l)=1$. It suffices to show that $(m+h-al)/2\equiv {\rm lcm}(h,m)/2~({\rm  mod}~b)$. Note that $\sigma_2(b)=\sigma_2(m)$. It follows that $m/2\equiv b/2~({\rm mod}~b)$ and ${\rm lcm}(h,m)/2\equiv\gamma(h/2)b/2~({\rm mod}~b)$. If $\sigma_2(m)=\sigma_2(h)$, then $\sigma_2(b)<\sigma_2(ah-l)$; if $\sigma_2(m)<\sigma_2(h)$, then $\sigma_2(b)=\sigma_2(ah-l)$. It follows that $(ah-l)/2\equiv b/\gamma(h/2)~({\rm mod}~b)$. Thus, $(m+h-al)/2\equiv{\rm lcm}(h,m)/2~({\rm  mod}~b)$.

Suppose $\sigma_2(h)\geq\sigma_2(m)>\sigma_2(l)\geq1$. It suffices to show that $(m+h-al)/2+b/2\equiv {\rm lcm}(h,m)/2~({\rm  mod}~b\gamma(l/2)/2)$. Note that $\sigma_2(b)=\sigma_2(l)<\sigma_2(m)$. It follows that $m/2\equiv{\rm lcm}(h,m)/2\equiv0~({\rm mod}~b\gamma(l/2)/2)$ and $(ah-l)/2\equiv b/2~({\rm mod}~b\gamma(l/2)/2)$. Thus, $(m+h-al)/2+b/2\equiv{\rm lcm}(h,m)/2~({\rm  mod}~b\gamma(l/2)/2)$.

This completes the proof of this step.

\begin{step}\label{step-3}
$(i,j)\in C$ if and only if there exists $r\in[b/3\gamma(m/2)]$ such that
\begin{align}
a(\hat{i}-3r)\equiv j+b(j+t_r)/\gamma(m/2)~({\rm mod}~b\gamma(l/2)/\gamma(m/2)),~ j\equiv t_r~({\rm mod}~2/\gamma(l/2)).\label{2.9}
\end{align}
\end{step}

Suppose $(i,j)=(3r+aj'+(j'+t_r)(m+h-{\rm lcm}(h,m))/2,j'+(j'+t_r)l/2)\in C$ for some $j'\in\mathbb{Z}$ and $r\in[b/3\gamma(m/2)]$. Then $j\equiv t_r~({\rm mod}~2/\gamma(l/2))$. Step \ref{step-2} implies
\begin{align}
a(\hat{i}-3r)&\equiv j'+a(j'+t_r)(m+h-{\rm lcm}(h,m))/2\nonumber\\
             &\equiv j'+(j'+t_r)(l/2+b/\gamma(m/2))\nonumber\\
             &\equiv j'+(j'+t_r)(l/2+b/\gamma(m/2)+bl/2\gamma(m/2))\nonumber\\
             &\equiv j+b(j+t_r)/\gamma(m/2)~({\rm mod}~b\gamma(l/2)/\gamma(m/2))\nonumber.
\end{align}

Suppose that \eqref{2.9} holds for some $r\in[b/3\gamma(m/2)]$ and $(i,j)\in\mathbb{Z}_m\times[l]$. If $\sigma_2(l)=1$, then $a(\hat{i}-3r)\equiv j~({\rm mod}~b/\gamma(m/2))$ and $j\equiv t_r~({\rm mod}~2)$, which imply that there are exactly $lm\gamma(m/2)/2b$ elements $(i,j)\in\mathbb{Z}_m\times[l]$ satisfying \eqref{2.9} for each $r\in[b/3\gamma(m/2)]$. Now we consider the case $\sigma_2(l)>1$. Since $\sigma_2(m)>1$, \eqref{2.9} is equivalence to $a(\hat{i}-3r)\equiv j+b(j+t_r)/2~({\rm mod}~b)$. It follows that $a(\hat{i}-3r)\equiv j~({\rm mod}~b/2)$. If $a(\hat{i}-3r)\equiv j~({\rm mod}~b)$, then $j\equiv t_r~({\rm mod}~2)$; if $a(\hat{i}-3r)\equiv j+b/2~({\rm mod}~b)$, then $j\equiv t_r+1~({\rm mod}~2)$. Hence, there are exactly $m/b$ elements $i\in\mathbb{Z}_m$ satisfying \eqref{2.9} for each $r\in[b/6]$ and $j\in[l]$. Since $|C|=ml/6$ from Step \ref{step-1}, $(i,j)\in C$ whenever \eqref{2.9} holds for some $r\in[b/3\gamma(m/2)]$.

\begin{step}
$C$ is a perfect code.
\end{step}

By Step \ref{step-2}, $C$ is an independent set. Let $(i,j)\in\mathbb{Z}_m\times[l]\setminus C$. By Step \ref{step-3}, \eqref{2.9} does not hold for all $r\in[b/3\gamma(m/2)]$. If $\sigma_2(l)=1$, then $a(\hat{i}-3r)\equiv j+c~({\rm mod}~b/\gamma(m/2))$ and $\hat{j}\equiv t_r+d~({\rm mod}~2)$ for some $r\in[b/3\gamma(m/2)]$ with $(c,d)\in\{(\pm1,0),(b/\gamma(m/2),1),(\pm(b/\gamma(m/2)+1),1)\}$. If $\sigma_2(l)>1$, then $\sigma_2(m)>1$, which implies $a(\hat{i}-3r)\equiv j+b/2(j+t_2)+c~({\rm mod}~b)$ and $j\equiv t_r+d~({\rm mod}~1)$ for some $r\in[b/6]$ with $(c,d)\in\{(\pm1,0),(b/2,1),(\pm(b/2+1),1)\}$. Thus,
\begin{align}
a(\hat{i}-3r)&\equiv j+b/\gamma(m/2)(j+t_r)+c~({\rm mod}~b\gamma(l/2)/\gamma(m/2)),\label{2.9-1}\\
j&\equiv t_r+d~({\rm mod}~2/\gamma(l/2))\label{2.9-2}
\end{align}
for some $r\in[b/3\gamma(m/2)]$ with $(c,d)\in\{(\pm1,0),(b/\gamma(m/2),1),(\pm(b/\gamma(m/2)+1),1)\}$.

If $(c,d)=(\pm1,0)$, from \eqref{2.9-1}, \eqref{2.9-2} and Step \ref{step-3}, then $(i-1,j)$ or $(i+1,j)$ is the unique neighbor of $(i,j)$ in $C$. If $(c,d)=(b/\gamma(m/2),1)$, from Step \ref{step-2}, then $(i+(m+h-{\rm lcm}(h,m))/2,j+l/2)$ is the unique neighbor of $(i,j)$ in $C$.

Now we consider the case $(c,d)=(\pm(b/\gamma(m/2)+1),1)$. If $c=-b/\gamma(m/2)-1$ and $j\neq0$, or $c=b/\gamma(m/2)+1$ and $j\neq l-1$, by \eqref{2.9-1} and \eqref{2.9-2}, then $a(\hat{i}-3r)\equiv (j\mp1)(b/\gamma(m/2)+1)+b t_r/\gamma(m/2)~({\rm mod}~b\gamma(l/2)/\gamma(m/2))$ and $j\equiv t_r+1~({\rm mod}~2/\gamma(l/2))$, which imply $(i,j-1)$ or $(i,j+1)$ is the unique neighbor of $(i,j)$ in $C$ from Step \ref{step-3}.

Suppose $c=-b/\gamma(m/2)-1$ and $j=0$. By \eqref{2.9-1} and \eqref{2.9-2}, if $\sigma_2(l)=1$, then $a(\hat{i}-3r)\equiv-1~({\rm mod}~b/\gamma(m/2))$ and $t_r\equiv1~({\rm mod}~2)$, which imply $a(\hat{i}+h-3r)\equiv l-1~({\rm mod}~b/\gamma(m/2))$ and $l-1\equiv t_r~({\rm mod}~2)$ since $ah\equiv l~({\rm mod}~b)$. If $\sigma_2(l)>1$, then $\sigma_2(m)>1$, and so $a(\hat{i}-3r)\equiv-b/2-1+bt_r/2~({\rm mod}~b)$, which imply
\begin{align}
a(\hat{i}+h-3r)\equiv-b/2-1+bt_r/2+l\equiv l-1+(l-1+t_r)b/2~({\rm mod}~b).\nonumber
\end{align}
Step \ref{step-3} implies that $(i+h,l-1)$ is the unique neighbor of $(i,0)$ in $C$.

Suppose $c=b/\gamma(m/2)+1$ and $j=l-1$. \eqref{2.9-1} and \eqref{2.9-2} imply $a(\hat{i}-3r)\equiv l+bt_r/\gamma(m/2)~({\rm mod}~b\gamma(l/2)/\gamma(m/2))$ and $t_r\equiv0~({\rm mod}~2/\gamma(l/2))$. Since $\sigma_2(m)\geq\sigma_2(l)$ and $ah\equiv l~({\rm mod}~b)$, one has $a(\hat{i}-h-3r)\equiv bt_r/\gamma(m/2)~({\rm mod}~b\gamma(l/2)/\gamma(m/2))$ and $t_r\equiv0~({\rm mod}~2/\gamma(l/2))$. Step \ref{step-3} implies that $(i-h,0)$ is the unique neighbor of $(i,l-1)$ in $C$.

This completes the proof of this proposition.
\end{proof}

\section{Proofs of Theorems \ref{main} and \ref{main2}}

In this section, we always assume that $\Gamma:=\Cay(G,S)$ admits a perfect code $D$, where $S$ is a generating set of $G$ with $|S|=5$. Define $N(v)$ to be the set of all neighbors of vertex $v$ in $\Gamma$.

To give the proofs of Theorems \ref{main} and \ref{main2}, we need several auxiliary lemmas.

\begin{lem}\label{1 involution}
The generating set $S$ has only one involution.
\end{lem}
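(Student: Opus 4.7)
The plan is to reduce the lemma to a one-line count by invoking Lee's theorem \cite{Le}. Since $\Gamma = \Cay(G, S)$ is a connected $5$-regular Cayley graph on an abelian group admitting a perfect code, Lee's theorem tells us that $\Gamma$ is a covering graph of a complete graph, which in this case must be $K_6$. I would translate this into algebra in the standard way: the covering structure corresponds to a subgroup $H \leq G$ of index $6$, whose cosets are the fibers of the cover, such that the natural projection $\pi\colon G \to G/H$ restricts to a bijection from $S$ onto $(G/H) \setminus \{0\}$. (The perfect code itself is then a coset of $H$; translating, we may take $C = H$.)

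Since $G/H$ is abelian of order $6$, it must be cyclic, so $G/H \cong \mathbb{Z}_6$, and there is a unique non-identity involution in $\mathbb{Z}_6$, namely $\bar{3}$. Let $s_0$ denote the unique element of $S$ with $\pi(s_0) = \bar{3}$. I would then finish with two short observations. First, any involution $s \in S$ satisfies $2\pi(s) = \pi(2s) = \bar{0}$, so $\pi(s) \in \{\bar{0}, \bar{3}\}$; since $s \notin H$ we have $\pi(s) \neq \bar{0}$, hence $\pi(s) = \bar{3}$, and then injectivity of $\pi|_S$ forces $s = s_0$. Second, $-s_0 \in S$ satisfies $\pi(-s_0) = -\bar{3} = \bar{3}$, so by the same injectivity $-s_0 = s_0$, giving $2s_0 = 0$. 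Thus $s_0$ is itself an involution, and $S$ has exactly one involution.

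The only genuine content is the passage from "covering of $K_6$" to the subgroup $H$ together with the transversal structure of $S$; after that, the argument is a single modular-arithmetic remark in $\mathbb{Z}_6$. I therefore do not anticipate a real obstacle beyond cleanly citing \cite{Le}. A fully self-contained alternative would be to first verify directly that a perfect code in an abelian Cayley graph containing the identity is a subgroup, and identify that subgroup as $H$; but invoking Lee's theorem makes the proof essentially immediate.
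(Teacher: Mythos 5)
There is a genuine gap, and it sits exactly where you locate ``the only genuine content'': the passage from ``$\Gamma$ is a covering graph of $K_6$'' to ``there is a subgroup $H\leq G$ of index $6$ whose cosets are the fibers and such that $\pi|_S$ is a bijection onto $(G/H)\setminus\{0\}$.'' Lee's theorem does not give you this. In Lee's construction the covering projection associated with a perfect code $D$ has fibers $D+s$ for $s\in S\cup\{0\}$, i.e.\ translates of the code, and a perfect code in an abelian Cayley graph need not be a subgroup or even a coset of one (nonlinear perfect binary codes in the Hamming graphs $H(n,2)=\Cay(\mathbb{Z}_2^n,S)$ are the classical counterexample). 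So your parenthetical ``the perfect code itself is then a coset of $H$; translating, we may take $C=H$'' and the proposed ``fully self-contained alternative'' (that a perfect code containing the identity is a subgroup) are both false as stated. What you actually need is the existence of a \emph{subgroup} perfect code, equivalently a group epimorphism $\pi\colon G\to\mathbb{Z}_6$ restricting to a bijection $S\to\mathbb{Z}_6\setminus\{0\}$; this is a strictly stronger statement than the existence of a perfect code, it is not supplied by \cite{Le}, and proving it would amount to a substantial part of the paper's classification rather than a one-line reduction. The closing modular-arithmetic step (any involution must map to $\bar{3}$, and $\pi(-s_0)=\pi(s_0)$ forces $2s_0=0$) is fine, but it rests entirely on the unproven homomorphism.

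For comparison, the paper proves the lemma without any such global structure: since $S$ is inverse-closed of odd size $5$ it contains $1$, $3$, or $5$ involutions; the case of $5$ involutions is excluded because then $G\cong\mathbb{Z}_2^k$ while a perfect code forces $6\mid|G|$; and the case of $3$ involutions is eliminated by a direct local analysis, chasing which vertices near a fixed codeword must or must not lie in the code until a vertex with two codeword neighbours (or none) appears. If you want to salvage your approach, you would have to first establish the much stronger ``subgroup transversal'' structure of $S$, which is not available at this stage of the argument.
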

\begin{proof}
Note that $S$ has at least one involution. If each element of $S$ is an involution, then $G$ is isomorphic to $\mathbb{Z}_2^3,\mathbb{Z}_2^4$ or $\mathbb{Z}_2^5$ since $\Gamma$ is connected, contrary to the fact that $6\mid |G|$ from \cite[Proposition 2.1]{Vu} and \cite[Lemma 2.3]{DYP2}. Thus, $S$ has exactly one or three involutions.

Assume the contrary, namely, $S:=\{\pm s,s_0,s_1,s_2\}$ has three distinct involutions $s_0,s_1,s_2$. Let $x(i,0,k)=is+ks_2$, $x(i,1,k)=x(i,0,k)+s_0$, $x(i,2,k)=x(i,1,k)+s_1$ and $x(i,3,k)=x(i,2,k)+s_0$ for $-1\leq i\leq 2$ and $0\leq k\leq1$. Note that $x(i,0,k)$ and $x(i,3,k)$ are adjacent for $-1\leq i\leq 2$ and $0\leq k\leq1$.

Since $\Gamma$ is vertex transitive, without loss of generality, we may assume that $x(0,0,0)\in D$. It follows that  $N(x(0,0,0))\cap D=\emptyset$, and so $x(0,3,0),x(0,0,1)\notin D$. Since $x(0,3,1),x(0,0,0)\in N(x(0,0,1))$, we get $x(0,3,1)\notin D$. The fact $N(x(0,3,1))=\{x(0,3,0),x(\pm1,3,1),x(0,0,1),x(0,2,1)\}$ implies $\{x(\pm1,3,1),x(0,2,1)\}\cap D\neq\emptyset$.

\textbf{Case 1.} $x(-1,3,1)$ or $x(1,3,1)\in D$.

Since the proofs are similar, we assume $x(1,3,1)\in D$. By $x(1,3,1),x(0,2,1)\in N(x(0,3,1))$, we get $x(0,2,1),x(0,3,1)\notin D$. The fact $x(1,3,1),x(1,1,1),x(1,2,0)\in N(x(1,2,1))$ implies $x(1,1,1),x(1,2,0),x(1,2,1)\notin D$. Since $x(0,0,0)\in D$ and $x(0,2,0),x(0,0,0),x(0,1,1)\in N(x(0,1,0))$, one has $x(0,2,0),x(0,1,1)\notin D$. Observe $N(x(0,2,1))=\{x(\pm1,2,1),x(0,1,1),x(0,3,1),x(0,2,0)\}$. The fact $x(0,2,1)\notin D$ implies $N(x(0,2,1))\cap D\neq\emptyset$, and so $x(-1,2,1)\in D$. It follows that $x(-1,1,1)\notin D$. Since $N(x(0,1,1))=\{x(\pm1,1,1),x(0,1,0),x(0,0,1),x(0,2,1)\}$ and $x(0,0,1)\notin D$, we get $x(0,1,0)\in D$, and so $N(x(0,0,0)\cap D\neq\emptyset$, contrary to the fact that $x(0,0,0)\in D$.

\textbf{Case 2.} $x(0,2,1)\in D$.

In view of the definition of a perfect code, we obtain $x(0,2,0),x(1,2,1)\notin D$. Since $x(1,2,0),x(0,2,1)\in N(x(1,2,1))$, one has $x(1,2,0)\notin D$. The fact that $x(0,0,0),x(1,1,0),x(1,3,0)\in N(x(1,0,0))$ implies $x(1,1,0),x(1,3,0),x(1,0,0)\notin D$. Note that $N(x(1,2,0))=\{x(0,2,0),x(2,2,0),x(1,1,0),x(1,3,0),x(1,2,1)\}$. Since $N(x(1,2,0))\cap D\neq\emptyset$, we get $x(2,2,0)\in D$, and so $x(2,1,0)\notin D$. The fact $x(0,2,1),x(1,1,1),x(0,1,0)\in N(x(0,1,1))$ implies $x(1,1,1),x(0,1,0)\notin D$. Since $N(x(1,1,0))=\{x(0,1,0),x(2,1,0),x(1,0,0),x(1,2,0),x(1,1,1)\}$, we obtain $D\cap(N(x(1,1,0))\cup x(1,1,0))=\emptyset$, a contradiction.
\end{proof}

By Lemma \ref{1 involution}, we may assume that $S=\{\pm s,\pm s',s_0\}$ with $o(s_0)=2$ and $o(s),o(s')>2$. Let $x(i,j,k)=is+js'+ks_0$ for all integers $i,j,k$, where the first coordinate could be read modulo $o(s)$, the second coordinate could be read modulo $o(s')$ and the last coordinate could be read modulo $2$. For all $x(i,j,k)\in V\Gamma$, we have
\begin{align}
N(x(i,j,k))=\{x(i\pm1,j,k),x(i,j\pm1,k),x(i,j,k+1)\}.\label{neighbors}
\end{align}

\begin{lem}\label{jb3}
If $x(i,j,k)\in D$, then $x(i+l,j+l,k+l)\in D$ for all $l\in\mathbb{Z}$ or $x(i-l,j+l,k+l)\in D$ for all $l\in\mathbb{Z}$.
\end{lem}
\begin{proof}
Since $\Gamma$ is vertex transitive, we may assume $(i,j,k)=(0,0,0)$. By the definition of a perfect code, we have $N(x(0,0,0))\cap D=\emptyset$, and so $x(0,1,0),x(0,0,1)\notin D$ from \eqref{neighbors}. Since $x(0,0,0),x(0,1,1)\in N(x(0,1,0))$, one gets $x(0,1,1)\notin D$. The fact $N(x(0,1,1))\cap D\neq\emptyset$ implies that $\{x(\pm1,1,1),(0,2,1)\}\cap D\neq\emptyset$.

Suppose $(0,2,1)\in D$. By the definition of a perfect code, one has $N(x(0,2,1))\cap D=\emptyset$, and so $x(1,2,1)\notin D$ from \eqref{neighbors}. Since $x(0,2,1),x(1,1,1)\in N(x(1,2,1))$, we get $x(1,1,1)\notin D$. The fact that $x(1,1,0),x(1,0,1),x(0,0,0)\in N(x(1,0,0))$ implies that $x(1,1,0),x(1,0,1)\notin D$. Since $x(0,1,1)\notin D$ and $N(x(1,1,1))\cap D\neq\emptyset$, one obtains $x(2,1,1)\in D$.

Since $x(2,1,1),x(2,2,0)\in N(x(2,2,1))$ and $x(1,2,0),x(0,2,1)\in N(x(0,2,0))$ from \eqref{neighbors}, we get $x(2,2,1),x(2,2,0),x(1,2,0),(0,2,0)\notin D$. The facts $N(x(1,2,0))\cap D\neq\emptyset$ and $x(1,1,0),x(1,2,1)\notin D$ imply $x(1,3,0)\in D$.

Since $x(1,3,0),x(-1,3,0)\in N(x(0,3,0))$ from \eqref{neighbors}, we have $x(-1,3,0)\notin D$. The fact $x(0,2,1),x(-1,2,0)\in N(x(-1,2,1))$ implies that $x(-1,2,0),x(-1,2,1)\notin D$. Since $x(0,0,0),x(-1,1,0)\in N(x(0,1,0))$, one obtains $x(-1,1,0)\notin D$. The facts $N(x(-1,2,0))\cap D\neq\emptyset$ and $x(0,2,0)\notin D$ imply $x(-2,2,0)\in D$.

By the definition of a perfect code, we have $N(x(-2,2,0))\cap D=\emptyset$, and so $x(-2,1,0)\notin D$ from \eqref{neighbors}. Since $x(-1,1,1),x(0,2,1)\in N(x(-1,2,1))$, one gets $x(-1,1,1)\notin D$. The fact $x(-1,0,0)\in N(x(0,0,0))$ implies $x(-1,0,0)\notin D$. Note that $x(-1,1,0)\notin D$. Since $x(0,1,0),x(-1,2,0)\notin D$, we get $N(x(-1,1,0))\cap D=\emptyset$, a contradicition.

Note that $x(1,1,1)\in D$ or $x(-1,1,1)\in D$. Similarly, $\{x(0,2,0),x(2,2,2)\}\cap D\neq\emptyset$ or $\{x(0,2,0),x(-2,2,2)\}\cap D\neq\emptyset$. Since $x(0,2,0),x(0,0,0)\in N(x(0,1,0))$, we have $x(2,2,2)\in D$ or $x(-2,2,2)\in D$. By induction, one gets $x(l,l,l)\in D$ for all $l\in\mathbb{Z}$ or $x(-l,l,l)\in D$ for all $l\in\mathbb{Z}$. This completes the proof of this lemma.
\end{proof}

\begin{lem}\label{2|mn}
We have $2\mid o(s)o(s')$.
\end{lem}
\begin{proof}
Since $\Gamma$ is vertex transitive, we may assume $x(0,0,0)\in D$. By Lemma \ref{jb3}, one has $x(-o(s)o(s'),o(s)o(s'),o(s)o(s'))=x(o(s)o(s'),o(s)o(s'),o(s)o(s'))=x(0,0,o(s)o(s'))\in D$. The fact $x(0,0,1)\in N(x(0,0,0))$ implies $x(0,0,1)\notin D$, and so $2\mid o(s)o(s)'$.
\end{proof}

\begin{lem}\label{jb4}
Let $x(i,j,k)\in D$. The following hold:
\begin{itemize}
\item[{\rm (i)}] $\{x(i,j,k+1),x(i-1,j,k),x(i-2,j,k),x(i-1,j,k+1),x(i-2,j,k+1)\}\cap D=\emptyset$, and $x(i-3,j,k)$ or $x(i-3,j,k+1)\in D$;

\item[{\rm (ii)}] $\{x(i,j,k+1),x(i,j-1,k),x(i,j-2,k),x(i,j-1,k+1),x(i,j-2,k+1)\}\cap D=\emptyset$, and $x(i,j-3,k)$ or $x(i,j-3,k+1)\in D$.
\end{itemize}
\end{lem}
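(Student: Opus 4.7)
The plan is to use the vertex-transitivity of $\Gamma$ to reduce to $(i,j,k)=(0,0,0)$, so that $x(0,0,0)\in D$; and since (ii) follows from (i) by interchanging the roles of $s$ and $s'$, I focus on (i). The easy non-memberships come from independence of $D$ together with the ``exactly one neighbor in $D$'' condition on vertices outside $D$. Independence gives $x(-1,0,0), x(0,0,1)\notin D$. Since the only known neighbor of $x(-1,0,0)$ in $D$ is $x(0,0,0)$, its other four neighbors must lie outside $D$, yielding $x(-2,0,0), x(-1,0,1), x(-1,\pm 1,0)\notin D$; the same argument for $x(0,0,1)$ yields $x(\pm 1,0,1), x(0,\pm 1,1)\notin D$.

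For the remaining non-membership $x(-2,0,1)\notin D$, I plan to first pin down $D\cap\{x(\pm 1,\pm 1,1)\}$. Lemma \ref{jb3} applied to $x(0,0,0)$ with $l=\pm 1$ puts at least one element of $D$ in each of $\{x(1,1,1), x(-1,1,1)\}$ and $\{x(1,-1,1), x(-1,-1,1)\}$. On the other hand, each of $x(1,0,1), x(-1,0,1), x(0,1,1), x(0,-1,1)$ lies outside $D$ and thus has exactly one neighbor in $D$, which caps the intersection of $D$ with each of $\{x(1,1,1), x(1,-1,1)\}$, $\{x(-1,1,1), x(-1,-1,1)\}$, $\{x(1,1,1), x(-1,1,1)\}$, $\{x(1,-1,1), x(-1,-1,1)\}$ at one element. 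Combining these upper and lower bounds forces $D\cap\{x(\pm 1,\pm 1,1)\}$ to be either $\{x(-1,1,1), x(1,-1,1)\}$ or $\{x(1,1,1), x(-1,-1,1)\}$. In both cases $x(-1,0,1)$ already has a neighbor in $D$ coming from $\{x(-1,1,1), x(-1,-1,1)\}$, so uniqueness forces $x(-2,0,1)\notin D$.

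The main obstacle is the existence claim that one of $x(-3,0,0), x(-3,0,1)$ lies in $D$, and I would argue by contradiction. Assuming both are outside $D$, the non-$D$ vertices $x(-2,0,0)$ and $x(-2,0,1)$ each need exactly one neighbor in $D$, and every non-$s'$ candidate has already been ruled out; hence exactly one of $x(-2,1,0), x(-2,-1,0)$ and exactly one of $x(-2,1,1), x(-2,-1,1)$ lies in $D$. The two matching-sign sub-cases are immediately killed by independence, since the chosen elements differ only by $s_0$. In each mixed-sign sub-case I apply Lemma \ref{jb3} with $l=1$ to a chosen vertex: from $x(-2,-1,1)\in D$ one gets $x(-1,0,0)\in D$ or $x(-3,0,0)\in D$, and from $x(-2,-1,0)\in D$ one gets $x(-1,0,1)\in D$ or $x(-3,0,1)\in D$; each option contradicts the non-memberships already established. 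This exhausts all sub-cases and completes part (i), after which (ii) follows by the $s\leftrightarrow s'$ symmetry.
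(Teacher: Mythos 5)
Your proof is correct, but it reaches the two non-trivial conclusions of part (i) by a genuinely different route than the paper. The paper's proof, after the same easy eliminations of $x(0,0,1)$, $x(-1,0,0)$, $x(-2,0,0)$, $x(-1,0,1)$, invokes Lemma \ref{jb3} together with Lemma \ref{2|mn} in a wrap-around fashion (taking $l=a(o(s)o(s')-1)$, which is odd) to place the single diagonal code element $x(-1,-a,1)$ in $D$; both the remaining non-membership $x(-2,0,1)\notin D$ and the existence of $x(-3,0,0)$ or $x(-3,0,1)$ in $D$ are then read off from neighborhoods of vertices adjacent to $x(-1,-a,1)$, with only one short residual case. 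You instead stay entirely local: you use only the $l=\pm1$ instances of Lemma \ref{jb3}, combine them with the ``at most one neighbor in $D$'' caps at $x(\pm1,0,1)$ and $x(0,\pm1,1)$ to pin $D\cap\{x(\pm1,\pm1,1)\}$ down to one of two antipodal pairs (which already kills $x(-2,0,1)$), and then settle the existence claim by a four-way case analysis at $x(-2,\pm1,\cdot)$, two cases dying by independence and two by further $l=1$ applications of Lemma \ref{jb3}. Your version buys independence from Lemma \ref{2|mn} and, more importantly, from the uniform-sign reading of Lemma \ref{jb3} (``$x(al,l,l)\in D$ for all $l$ with a single $a$'') that the paper's wrap-around step leans on but which is only formally established later in Lemma \ref{tongyi}; the cost is a somewhat longer case analysis. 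One small point worth a sentence in a final write-up: when you conclude that $D\cap\{x(\pm1,\pm1,1)\}$ is one of the two antipodal pairs, degenerate coincidences such as $x(1,1,1)=x(-1,-1,1)$ (possible when $2(s+s')=0$) do not disturb the argument, since all you actually use afterwards is that $x(-1,0,1)$ has a neighbor in $D$ among $\{x(-1,1,1),x(-1,-1,1)\}$, and that survives any such collapse.
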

\begin{proof}
(i) Since $\Gamma$ is vertex transitive, we may assume $(i,j,k)=(0,0,0)$. It follows from \eqref{neighbors} that $x(0,0,1),x(-1,0,0)\notin D$. Since $x(0,0,0),x(-2,0,0),x(-1,0,1)\in N(x(-1,0,0))$, we get $x(-2,0,0),x(-1,0,1)\notin D$.

By Lemma \ref{jb3}, we have $x(al,l,l)\in D$ for each integer $l$ and some $a\in\{\pm1\}$. In view of Lemma \ref{2|mn}, we get
$x(-1,-a,1)=x(o(s)o(s')-1,a(o(s)o(s')-1),a(o(s)o(s')-1))\in D.$ Since $x(-1,-a,1),x(-2,0,1)\in N(x(-2,-a,1))$ from \eqref{neighbors}, one obtains $x(-2,0,1)\notin D$. Then $x(0,0,1),x(-1,0,0),x(-2,0,0),x(-1,0,1),x(-2,0,1)\notin D$.

Since $x(-1,-a,1)\in D$ and $x(-2,-a,0),x(-1,-a,1)\in N(x(-2,-a,1))$ from \eqref{neighbors}, we have $x(-2,-a,0)\notin D$. The fact $x(-2,0,0)\notin D$ implies $N(x(-2,0,0))\cap D\neq\emptyset$. Since $(-1,0,0),x(-2,0,1)\notin  D$, one gets $x(-3,0,0)$ or $x(-2,a,0)\in D$.

Suppose $x(-3,0,0)\notin D$. Note that $x(-2,a,0)\in D$. It follows from \eqref{neighbors} that $x(-2,a,1)\notin D$. Since $x(-1,-a,1)\in D$, we have $x(-1,0,1),x(-2,-a,1)\notin D$. The fact $x(-2,0,1)\notin D$ implies $N(x(-2,0,1))\cap D\neq\emptyset$. Since $x(-2,0,0)\notin D$, one gets $x(-3,0,1)\in D$.

This completes the proof of (i).

(ii) The proof is similar, hence omitted.
\end{proof}

\begin{lem}\label{tongyi}
The one of the following holds:
\begin{itemize}
\item[{\rm(i)}] $x(i+1,j+1,k+1)\in D$ for all $x(i,j,k)\in D$;

\item[{\rm(ii)}] $x(i-1,j+1,k+1)\in D$ for all $x(i,j,k)\in D$.
\end{itemize}
\end{lem}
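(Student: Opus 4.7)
The plan is to argue by contradiction. By the vertex-transitivity of $\Gamma$, I may assume $x(0,0,0)\in D$; Lemma~\ref{jb3} then forces $x(1,1,1)\in D$ or $x(-1,1,1)\in D$, and since these alternatives are interchanged by the $s\leftrightarrow -s$ symmetry (which also swaps conclusions (i) and (ii)), I may further assume $x(1,1,1)\in D$ and aim to establish case~(i).

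The backbone is the following one-step propagation, which I prove first: \emph{if $x(p,q,r), x(p+a,q+1,r+1)\in D$ for some $a\in\{-1,1\}$, then $x(p+2a,q+2,r+2)\in D$.} Indeed, Lemma~\ref{jb3} applied to $x(p+a,q+1,r+1)$ with step $1$ leaves only the possibilities $x(p+2a,q+2,r+2)\in D$ or $x(p,q+2,r+2)\in D$, and the latter contradicts Lemma~\ref{jb4}(ii) applied to $x(p,q+2,r+2)$, which would force $x(p,q,r+2)=x(p,q,r)\notin D$.

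Using this propagation, I first show that the full diagonal $\{x(l,l,l):l\in\mathbb{Z}\}$ is contained in $D$. Forward iteration from $x(0,0,0), x(1,1,1)$ is immediate; for the backward direction, Lemma~\ref{jb3} at $x(0,0,0)$ with $l=-1$ gives $x(-1,-1,-1)\in D$ or $x(1,-1,-1)\in D$, and the latter is ruled out because the key substep applied to the pair $x(1,-1,-1), x(0,0,0)$ with $a=-1$ would force $x(-1,1,1)\in D$, whereupon $x(-1,1,1)$ and $x(1,1,1)$ would both cover $x(0,1,1)$, violating perfect-code uniqueness.

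For the main statement, pick any $x(i,j,k)\in D$; Lemma~\ref{jb3} gives $a\in\{-1,1\}$ with $x(i+a,j+1,k+1)\in D$, and I must rule out $a=-1$. Assuming $a=-1$, the same propagation (with the backward direction handled by the sharing-a-neighbor trick above) shows that the entire line $L=\{x(i-l,j+l,k+l):l\in\mathbb{Z}\}$ lies in $D$, alongside the diagonal $L_{0}=\{x(l,l,l):l\in\mathbb{Z}\}$. To extract a contradiction, I plan to choose integers $l_1,l_2$ of the same parity so that $v_1=x(l_1,l_1,l_1)\in L_0$ and $v_2=x(i-l_2,j+l_2,k+l_2)\in L$ differ by one of $s, s', 2s$, or $s+s'$, selected according to the parities of $j-k$ and $i+j$. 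Since $o(s),o(s')>2$, a difference of $\pm s$ or $\pm s'$ makes $v_1,v_2$ adjacent (contradicting the independence of $D$), while a difference of $2s$ or $s+s'$ makes $v_1-s$ a common neighbor of $v_1$ and $v_2$ (contradicting the uniqueness of the covering element in $D$).

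I expect the main obstacle to be this final parity case split: in each of the four regimes for $(j-k,\,i+j)$, one must verify that the advertised choice of $(l_1,l_2)$ is an integer pair of the same parity producing the claimed difference, after which the perfect-code definition is directly violated.
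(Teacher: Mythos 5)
Your proposal is correct, and it rests on the same underlying mechanism as the paper's proof---two code diagonals of opposite slope must eventually produce a vertex that violates the perfect-code condition---but the execution is genuinely different. The paper first normalises, via Lemma~\ref{jb4}, to the two specific lines $\{x(l,l,l)\}$ and $\{x(3-l,l,k'+l)\}$ through consecutive code elements on the $s$-axis and then exhibits a single offending vertex, $x(2,1,0)$; this is very short but leans on an only sketched reduction to elements with first coordinate $0$. You instead keep the second line completely general and locate the conflict by solving for $l_1,l_2$ so that $x(l_1,l_1,l_1)$ and $x(i-l_2,j+l_2,k+l_2)$ differ by $s$, $s'$, $2s$ or $s+s'$; the price is a four-way parity analysis, and the gain is that no normalisation of the second line is needed and the full-line propagation (your key substep, which correctly combines Lemma~\ref{jb3} with Lemma~\ref{jb4}(ii), since $x(p,q+2,r+2)\in D$ would force $x(p,q,r+2)=x(p,q,r)\notin D$) is made explicit rather than left implicit. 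Two small repairs are needed. First, in the $2s$ and $s+s'$ cases the common neighbour is $v_1+s$, not $v_1-s$ (indeed $v_2-(v_1-s)$ equals $3s$ or $2s+s'$, which need not lie in $S$), and one should note $v_1\neq v_2$ there because $o(s)>2$ and $s'\neq -s$. Second, the correct requirement on $l_1,l_2$ is not literally ``same parity'' but $l_2-l_1\equiv k\pmod 2$, so that the third coordinate of $v_2-v_1$ vanishes; combined with $l_1-l_2\in\{j,j-1\}$ and $l_1+l_2\in\{i-1,i,i-2\}$ this singles out exactly one of your four targets according to the parities of $i+j$ and $j-k$, and the resulting linear systems do have integer solutions in each regime, so the case split closes as you anticipated.
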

\begin{proof}
Since $\Gamma$ is vertex transitive, we may assume $x(0,0,0)\in D$. By Lemma \ref{jb3}, we have $x(i+l,j+l,k+l)\in D$ with $l\in\mathbb{Z}$ or $x(i-l,j+l,k+l)\in D$ with $l\in\mathbb{Z}$ for each $x(i,j,k)\in D$.  It suffices to show that $x(l,j+l,k+l)\in D$ for all $x(0,j,k)\in D$, or $x(-l,j+l,k+l)\in D$ for all $x(0,j,k)\in D$.

Suppose not. By Lemma \ref{jb4}, we may assume $x(l,l,l),x(3-l,l,k'+l)\in D$ for all $l\in\mathbb{Z}$ and some $k'\in\{0,1\}$. Then $x(2,2,0),x(2,1,k'+1)\in D$. Since $x(2,1,0)\in N(x(2,2,0))$ from \eqref{neighbors}, one gets $x(2,1,0)\notin D$, and so $k'=0$, which imply $x(2,1,1),x(2,2,0)\in N(x(2,1,0))\cap D$, a contradiction.

Thus, (i) or (ii) is valid.
\end{proof}

\begin{lem}\label{jb5}
Let $a\in\{\pm1\}$ and $x(0,0,0)=x(h,l,0)$ for some $(h,l)\in\{(i,j)\mid 0\leq i\leq o(s)~\textrm{and}~0\leq j\leq o(s')\}$. Suppose $x(i+a,j+1,k+1)\in D$ for all $x(i,j,k)\in D$. Then $3\mid(l-ah)$. Moreover, if $\sigma_2(h)=\sigma_2(l)=0$, then $\sigma_2(o(s))>\sigma_2(l-ah)$ and $\sigma_2(o(s'))>\sigma_2(l-ah)$.
\end{lem}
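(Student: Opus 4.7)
The plan is to exploit the shift hypothesis to produce many explicit elements of $D$ and then extract numerical constraints by combining this with the ``period--3 with possible $k$--flip'' structure in Lemma~\ref{jb4} and a counting argument in a suitable quotient. By vertex-transitivity I may assume $x(0,0,0)\in D$; setting $\Delta:=as+s'+s_0$, the hypothesis becomes $D+\Delta=D$, so $r\Delta\in D$ for every $r\in\mathbb{Z}$. Using $hs+ls'=0$ I compute
\[
rl\,\Delta=r(al-h)s+(rl\bmod 2)\,s_0,\qquad rh\,\Delta=r(h-al)s'+(rh\bmod 2)\,s_0,
\]
so $x(r(al-h),0,rl\bmod 2)\in D$ and $x(0,r(h-al),rh\bmod 2)\in D$ for all $r\in\mathbb{Z}$.

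For $3\mid(l-ah)$ I pass to $\bar G:=G/\langle s_0\rangle$ with projection $\pi$. Since adjacent $D$-elements cannot both lie in $D$, $\pi$ is injective on $D$, so $|\bar D|=|D|=|G|/6=|\bar G|/3$. The hypothesis gives $\bar D+(a\bar s+\bar s')=\bar D$; Lemma~\ref{jb4}(i) gives $\bar D-3\bar s\subseteq\bar D$ (both of its disjuncts $x(i-3,j,k)$ and $x(i-3,j,k+1)$ project to the same element of $\bar G$), hence equality by finiteness; Lemma~\ref{jb4}(ii) gives $\bar D+3\bar s'=\bar D$ symmetrically. Thus $\bar D$ is a union of cosets of $K:=\langle 3\bar s,3\bar s',a\bar s+\bar s'\rangle$, which forces $3\mid|\bar G/K|$. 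In $\bar G/K$ the relations $\bar s'=-a\bar s$ and $3\bar s=0$ make it cyclic on $\bar s$ of order at most $3$, so $|\bar G/K|=3$. The $G$-relation $hs+ls'=0$ descends to $(h-al)\bar s=0$ in $\bar G/K$, so the order $3$ of $\bar s$ must divide $h-al$, i.e., $3\mid(h-al)$, equivalently $3\mid(l-ah)$.

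For the ``moreover'' part, suppose $\sigma(h)=\sigma(l)=0$ and set $v:=\sigma(l-ah)\ge 1$. Since $l$ is odd, $rl\bmod 2=r\bmod 2$, so the first identity above gives $x(r(al-h),0,r\bmod 2)\in D$ for every $r$. Taking $r_0:=o(s)/\gcd(o(s),al-h)$, the first coordinate wraps back to $0$ at step $r_0$, which places $x(0,0,r_0\bmod 2)\in D$. But $x(0,0,0)\in D$ and the independence of $D$ exclude $x(0,0,1)$ from $D$, so $r_0$ must be even; equivalently, $\sigma(o(s))>\sigma(\gcd(o(s),al-h))=\min(\sigma(o(s)),v)$, which can only hold when $\sigma(o(s))>v$. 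The same argument applied to the second identity, using that $h$ is odd, yields $\sigma(o(s'))>v$.

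The main obstacle I foresee is the invariance check in the counting step. Lemma~\ref{jb4}(i) genuinely only gives the disjunction $x(i-3,j,k)\in D$ or $x(i-3,j,k+1)\in D$, so $D$ itself is not $3s$-invariant in $G$; the argument has to be run in the $s_0$-quotient $\bar G$, where the two options collapse and the ambiguous backward step becomes an honest translation. Once that is in place, the rest is a clean order-counting computation.
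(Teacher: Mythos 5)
Your proof is correct, and for the main claim $3\mid(l-ah)$ it takes a genuinely different route from the paper. The paper argues locally: assuming $3\nmid(l-ah)$, it locates the code element $x(al-h,0,l)=x(al,l,l)$, uses Lemma \ref{jb4}(i) to force a second code element one step away in the $s$-direction with the $s_0$-coordinate flipped, and concludes that some vertex has two neighbours in $D$, contradicting the perfect-code property. You instead work globally in the quotient $\bar G=G/\langle s_0\rangle$, where the two disjuncts of Lemma \ref{jb4} collapse to an honest translation, so that $\bar D$ is invariant under $K=\langle 3\bar s,3\bar s',a\bar s+\bar s'\rangle$; the count $|\bar D|=|D|=|G|/6=|\bar G|/3$ forces $[\bar G:K]=3$, and pushing the relation $hs+ls'=0$ down to $\bar G/K$ yields $3\mid(h-al)$. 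Your version is structurally cleaner and makes the role of Lemma \ref{jb4} transparent, at the price of invoking the cardinality $|D|=|G|/6$ and the injectivity of the projection on $D$, neither of which the paper's local argument needs; all of these auxiliary facts do hold, and your invariance and divisibility steps check out. For the ``moreover'' part your argument is essentially the paper's in different clothing: the paper substitutes $j=o(s)l/2^{\sigma(o(s))}$ into $x(aj,j,j)\in D$ and derives $x(0,0,1)\in D$ under the assumption $\sigma(o(s))\le\sigma(l-ah)$, which is precisely your observation that the wrap-around index $r_0=o(s)/\gcd(o(s),al-h)$ must be even. No gaps.
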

\begin{proof}
Since $\Gamma$ is vertex transitive, we may assume $x(ai,i,i)\in D$ for each integer $i$. The fact $x(0,0,0)=x(h,l,0)$ implies $x(i,j,k)=x(i-h,j-l,k)$ for all $x(i,j,k)\in V\Gamma$.

Suppose $3\nmid(l-ah)$. It follows that $3\mid(al-h+b)$ for some $b\in\{\pm1\}$. Note that $x(al-h,0,l)=x(al,l,l)\in D$. Since $x(0,0,0)\in D$, from Lemma \ref{jb4} (i), we have $\{x(al-h+b,0,l),x(al-h+b,0,l+1)\}\cap D\neq\emptyset$. Since $D$ is an independent set, from \eqref{neighbors}, one gets $x(al-h+b,0,l+1)\in D$, which implies $|N(x(al-h,0,l+1))\cap D|>1$, a contradiction. The first statement is valid.

Now suppose $2\nmid l$ and $2\nmid h$. If $\sigma_2(o(s))\leq\sigma_2(al-h)$, then
\begin{align}
x(0,0,1)=x\Big(\frac{o(s)(al-h)}{2^{\sigma_2(o(s))}},0,1\Big)=x\Big(\frac{ao(s)l}{2^{\sigma_2(o(s))}},\frac{o(s)l}{2^{\sigma_2(o(s))}},\frac{o(s)l}{2^{\sigma_2(o(s))}}\Big)\in D,\nonumber
\end{align}
contrary to the fact that $x(0,0,0)\in D\cap N(x(0,0,1))$. Hence, $\sigma_2(o(s))>\sigma_2(al-h)$. Similarly, $\sigma_2(o(s'))>\sigma_2(al-h)$. The second statement is also valid.
\end{proof}

\begin{lem}\label{s_0=ms/2}
Let $a\in\{\pm1\}$ and $x(0,0,0)=x(h,l,0)$ for some $(h,l)\in\{(i,j)\mid 0\leq i\leq o(s)~\textrm{and}~0\leq j\leq o(s')\}$. Suppose $x(i+a,j+1,k+1)\in D$ for all $x(i,j,k)\in D$. Then the following hold:
\begin{itemize}
\item[{\rm(i)}] if $s_0=o(s)s/2$, then $\sigma_2(l)=0$ and $\sigma_2(o(s))=\sigma_2(l-ah)+1$, or $\sigma_2(h)\sigma_2(l)\neq0$ and $\sigma_2(o(s))\leq\sigma_2(l-ah)$;

\item[{\rm(ii)}] if $s_0=o(s')s'/2$, then $\sigma_2(h)=0$ and $\sigma_2(o(s'))=\sigma_2(l-ah)+1$, or $\sigma_2(h)\sigma_2(l)\neq0$ and $\sigma_2(o(s'))\leq\sigma_2(l-ah)$.
\end{itemize}
\end{lem}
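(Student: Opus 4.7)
Proof plan. Part (ii) follows from part (i) by the symmetric argument swapping the roles of $s$ and $s'$ (and correspondingly of $h$ and $l$), so I concentrate on part (i). WLOG $x(0,0,0)\in D$; by the hypothesis of the lemma, iteration gives $x(ar,r,r)\in D$ for every $r\in\mathbb{Z}$. Set $m=o(s)$ and $b=l-ah$; Lemma~\ref{jb5} gives $3\mid b$. Since $s_0=ms/2$ requires $\sigma(m)\geq 1$, two identifications are available:
\[
x(i,j,1)=x(i+m/2,j,0),\qquad x(i+h,j+l,k)=x(i,j,k).
\]
For each $q\in\mathbb{Z}$, applying the second identification $q$ times to $x(aql,ql,ql)$ gives $x(qab,0,ql)$ using $al-h=ab$, and one use of the first flattens this to
\[
x\bigl(qab+\varepsilon_q\cdot m/2,\; 0,\; 0\bigr)\in D,\qquad \varepsilon_q:=(ql)\bmod 2.
\]

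The forcing mechanism is to pick $q$ so that the first coordinate of this reduced element equals $m/2$ modulo $m$; the resulting vertex would be $x(m/2,0,0)=x(0,0,1)$, a neighbor of $x(0,0,0)\in D$ that is forbidden from $D$. Writing $m=2^{\sigma(m)}m'$, $b=2^{\sigma(b)}b'$ with $m',b'$ odd, $d=\gcd(m',b')$, and $m'':=m'/d$, the candidate is $q^{*}:=2^{\sigma(m)-\sigma(b)-1}m''$; after dividing the target congruence through by the common factor $2^{\sigma(m)-1}m'$ it reduces to the trivial identity $\pm b''\equiv 1\pmod{2}$, where $b'':=b'/d$ is odd. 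The contradiction is then produced precisely when $q^{*}$ has the parity compatible with $\varepsilon_{q^{*}}$, which gives a parity-type obstruction. Four sub-cases by the parities of $h,l$:
\begin{itemize}
\item[(I)] $\sigma(h)=\sigma(l)=0$. Here $l$ is odd so $\varepsilon_q=q\bmod 2$, and the contradiction is produced iff $q^{*}$ is even, i.e.\ iff $\sigma(m)\geq\sigma(b)+2$. Combined with $\sigma(m)>\sigma(b)$ from Lemma~\ref{jb5}, this forces $\sigma(m)=\sigma(b)+1$.
\item[(II)] $\sigma(l)=0$, $\sigma(h)\geq 1$. Here $\sigma(b)=0$ and $\varepsilon_q=q\bmod 2$, so the contradiction appears iff $\sigma(m)\geq 2$; with $\sigma(m)\geq 1$ this gives $\sigma(m)=1=\sigma(b)+1$.
\item[(III)] $\sigma(h)=0$, $\sigma(l)\geq 1$. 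Here $l$ even so $\varepsilon_q=0$ for every $q$, and the contradiction appears for every $\sigma(m)\geq 1$. Hence this case is impossible, consistent with its absence from the statement.
\item[(IV)] $\sigma(h),\sigma(l)\geq 1$. Here $\varepsilon_q=0$ and the contradiction appears whenever $\sigma(m)>\sigma(b)$, forcing $\sigma(m)\leq\sigma(b)$.
\end{itemize}

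The main obstacle is the 2-adic bookkeeping. One must verify in each sub-case that $qab+\varepsilon_q(m/2)\equiv m/2\pmod{m}$ holds for the proposed $q^{*}$, and conversely that no smaller $q$ produces a contradiction in the ``permitted'' regimes (so that the stated dichotomy is tight). The key congruence reduces in all four sub-cases to the trivial identity on $b''$; what carves out the precise dichotomy is the consistency between the parity of $q^{*}$ and the value of $\varepsilon_{q^{*}}$. Once this is set up, the four sub-cases assemble into conclusion (i), and the analogous argument with $s$ and $s'$ interchanged yields (ii).
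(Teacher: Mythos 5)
Your proposal is correct and follows essentially the same route as the paper: both force the element obtained by sliding a suitable multiple of $l$ along the diagonal $\{x(ar,r,r)\}\subseteq D$ down to $x(o(s)/2,0,0)=x(0,0,1)$, contradicting $x(0,0,0)\in D$ being adjacent to it. The paper packages your four parity sub-cases into the single choice $r=o(s)l/2^{\sigma(l-ah)+1}$ (your $q^{*}$ up to an odd factor), so the difference is only organizational.
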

\begin{proof}
Since $\Gamma$ is vertex transitive, we may assume $x(ai,i,i)\in D$ for each integer $i$.

(i) Since $s_0=o(s)s/2$, we have $2\mid o(s)$. Note that $x(i,j,k)=x(i-h,j-l,k)$ and $x(i,j,1)=x(i+o(s)/2,j,0)$ for all integers $i,j,k$. Assume the contrary, namely, $\sigma_2(l)=0$ and $\sigma_2(o(s))\neq\sigma_2(l-ah)+1$, $\sigma_2(l)\neq0$ and $\sigma_2(h)=0$, or $\sigma_2(h)\sigma_2(l)\neq0$ and $\sigma_2(o(s))>\sigma_2(l-ah)$. If $\sigma_2(l)=0$, from Lemma \ref{jb5}, then $\sigma_2(o(s))>\sigma_2(l-ah)+1$; if $\sigma_2(l)\neq0$ and $\sigma_2(h)=0$, then $\sigma_2(l-ah)=0$, and so $\sigma_2(o(s))>\sigma_2(l-ah)$. Observe
\begin{align}
x\Big(\frac{ao(s)l}{2^{\sigma_2(l-ah)+1}},\frac{o(s)l}{2^{\sigma_2(l-ah)+1}},\frac{o(s)l}{2^{\sigma_2(l-ah)+1}}\Big)=x\Big(\frac{o(s)(al-h)}{2^{\sigma_2(l-ah)+1}},0,0\Big)=x(o(s)/2,0,0).\nonumber
\end{align}
It follows that $x(0,0,1)\in D$, contrary to the fact that $x(0,0,0)\in D\cap N(x(0,0,1))$. This proves (i).

(ii) The proof is similar, hence omitted.
\end{proof}

\begin{lem}\label{s_0=ms/2+ns'/2}
Let $l$ be the minimal positive integer with $x(0,0,0)=x(h,l,0)$ for some $h\in[o(s)]$. Suppose $s_0=o(s)s/2+o(s')s'/2$. Then $\sigma_2(o(s))\neq\sigma_2(o(s)')$ or $\sigma_2(o(s))=\sigma_2(o(s)')=1$. Moreover, the following hold:
\begin{itemize}
\item[{\rm (i)}] if $\sigma_2(o(s))=\sigma_2(o(s)')=1$, then $\sigma_2(h)\geq\sigma_2(o(s))=\sigma_2(l)=1$;

\item[{\rm (ii)}] if $\sigma_2(o(s))>\sigma_2(o(s)')$, then $\sigma_2(h)\geq\sigma_2(o(s))>\sigma_2(l)\geq1$;

\item[{\rm (iii)}] if $\sigma_2(o(s'))>\sigma_2(o(s))$, then $\sigma_2(l)\geq\sigma_2(o(s'))>\sigma_2(h)\geq1$.
\end{itemize}
\end{lem}
\begin{proof}
Since $\Gamma$ is vertex transitive, from Lemma \ref{tongyi}, we may assume $x(ai,i,i)\in D$ for each $i\in\mathbb{Z}$ and some $a\in\{\pm1\}$. Since $s_0=o(s)s/2+o(s')s'/2$, we have $2\mid o(s)$ and $2\mid o(s')$. Note that $x(i,j,1)=x(i+o(s)/2,j+o(s')/2,0)$ for all integers $i,j$. If $\sigma_2(o(s))=\sigma_2(o(s'))>1$, then
\begin{align}
 x\Big(\frac{ao(s)o(s')}{2^{\sigma_2(o(s))+1}},\frac{o(s)o(s')}{2^{\sigma_2(o(s))+1}},\frac{o(s)o(s')}{2^{\sigma_2(o(s))+1}}\Big)=x\Big(\frac{o(s)}{2},\frac{o(s')}{2},0\Big)=x(0,0,1)\in D,\nonumber
\end{align}
a contradiction. Thus, the first statement is valid.

Suppose that $\sigma_2(o(s))>\sigma_2(o(s)')$ or $\sigma_2(o(s))=\sigma_2(o(s)')=1$. Note that $x(i,j,k)=x(i-h,j-l,k)$ for all integers $i,j,k$. If $h=0$, then $l=o(s')$, which implies that (i) and (ii) hold for this case. Now we consider the case $h\neq0$. Since $x(0,l,0)=x(-h,0,0)$ and the order of $x(-h,0,0)$ in $\langle s\rangle$ is $o(s)/{\rm gcd}(o(s),h)$, one has $o(s')=o(s)l/{\rm gcd}(o(s),h)={\rm lcm}(o(s),h)l/h$. If $\sigma_2(o(s))>\sigma_2(h)$, then
\begin{align}
s_0=x\Big(\frac{o(s)}{2},\frac{o(s')}{2},0\Big)=x\Big(\frac{o(s)}{2},\frac{{\rm lcm}(o(s),h)l}{2h},0\Big)=x\Big(\frac{o(s)-{\rm lcm}(o(s),h)}{2},0,0\Big),\nonumber
\end{align}
which implies $s_0=0$, a contradiction. Hence, $\sigma_2(h)\geq\sigma_2(o(s))$. It follows that $\sigma_2(o(s'))=\sigma_2(l)$. Thus, (i) and (ii) hold.

Similarly, (iii) is also valid.
\end{proof}

Now we are ready to give a proof of Theorem \ref{main}.

\begin{proof}[Proof of Theorem~\ref{main}]
The proof of the sufficiency is straightforward by Propositions \ref{cayley},~\ref{(iv)},~\ref{(v)},~\ref{(vi)} and Corollaries \ref{cayley-2},~\ref{cayley-3}.

Now we prove the necessity. Since  $\Gamma$ admits a perfect code and $x(o(s),0,0)=x(0,o(s'),0)=x(0,0,0)$, from Lemmas \ref{tongyi} and \ref{jb5}, we get $3\mid o(s)$ and $3\mid o(s')$.

Let $l$ be the minimal positive integer with $x(0,0,0)=x(h,l,0)$ for some $h\in[o(s)]$. By Lemmas \ref{tongyi} and \ref{jb5}, we have $3\mid(l-ah)$ for some $a\in\{\pm1\}$.

\textbf{Case 1.} $s_0\notin\langle s,s'\rangle$.

By Lemma \ref{2|mn}, we may assume $2\mid o(s)$. Since $x(i,0,0)=x(i+h,l,0)$ for all $i$, from \eqref{neighbors} and Construction \ref{construcution}, $\Gamma$ is isomorphic to $\Gamma_{o(s),l,h}\times K_2$. In view of Lemmas \ref{tongyi} and \ref{jb5}, we have $\sigma_2(h)\sigma_2(l)\neq0$ or $\sigma_2(o(s))>\sigma_2(l-ah)$. By Proposition \ref{cayley}, $\Gamma$ is a Cayley graph on an abelian group.

\textbf{Case 2.} $s_0\in\langle s,s'\rangle$.

We divide our proof into two subcases according to whether $s_0$ is in the set $\{(o(s)/2)s,(o(s')/2)s'\}$.

\textbf{Case 2.1.} $s_0\in\{(o(s)/2)s,(o(s')/2)s'\}$.

Without loss of generality, we may assume $s_0=(o(s)/2)s$. It follows that $6\mid o(s)$ and $x(i,j,1)=x(i+o(s)/2,j,0)$ for all integers $i,j$. Since $x(i,0,0)=x(i+h,l,0)$ for all $i$, from \eqref{neighbors} and Construction \ref{cons-2}, $\Gamma$ is isomorphic to $\Gamma_{o(s),l,h}'$. By Lemma \ref{tongyi} and Lemma \ref{s_0=ms/2} (i), one gets $\sigma_2(l)=0$ and $\sigma_2(o(s))=\sigma_2(l-ah)+1$, or $\sigma_2(h)\sigma_2(l)\neq0$ and $\sigma_2(o(s))\leq\sigma_2(l-ah)$. It follows from Corollary \ref{cayley-2} that $\Gamma$ is a Cayley graph on an abelian group.

\textbf{Case 2.2.} $s_0\notin\{(o(s)/2)s,(o(s')/2)s'\}$.

Without loss of generality, we may assume $\sigma_2(o(s))\geq\sigma_2(o(s'))$. If $2\nmid o(s')$, from Lemma \ref{2|mn}, then $2\mid o(s)$, and so $(o(s)/2)s$ is the unique involution in $G$, contrary to the fact that $s_0\neq(o(s)/2)s$. Then $2\mid o(s)$ and $2\mid o(s')$. Note that $(o(s)/2)s$, $(o(s')/2)s'$ and $(o(s)/2)s+(o(s')/2)s'$ are all involutions in $G$. It follows that $s_0=(o(s)/2)s+(o(s')/2)s'$, and so $x(i,j,1)=x(i+o(s)/2,j+o(s')/2,0)$ for all integers $i,j$.

By Lemma \ref{s_0=ms/2+ns'/2}, we get $\sigma_2(h)\geq\sigma_2(o(s))>\sigma_2(l)\geq1$ or $\sigma_2(h)\geq\sigma_2(o(s))=\sigma_2(l)=1$. If $h=0$, then $o(s')=l$, and so $x(i,j,1)=x(i+o(s)/2,j+l/2,0)$ for all integers $i,j$. Now suppose $h\neq0$. Since $x(0,l,0)=x(-h,0,0)$ and the order of $x(-h,0,0)$ in $\langle s\rangle$ is $o(s)/{\rm gcd}(o(s),h)$, we have $o(s')=o(s)l/{\rm gcd}(o(s),h)={\rm lcm}(o(s),h)l/h$. It follows that $x(i,j,1)=x(i+o(s)/2,j+{\rm lcm}(o(s),h)l/2h,0)=x(i+(o(s)+h-{\rm lcm}(o(s),h))/2,j+l/2,0)$.

Since $x(i,0,0)=x(i+h,l,0)$ for all $i$, from \eqref{neighbors} and Construction \ref{cons-3}, $\Gamma$ is isomorphic to $\Gamma_{o(s),l,h}''$. By Corollary \ref{cayley-3}, $\Gamma$ is a Cayley graph on an abelian group.

This completes the proof of the necessity.
\end{proof}

Next, we give a proof of Theorem \ref{main2}.

\begin{proof}[Proof of Theorem~\ref{main2}]
Note that $x(0,0,0)\in D$. By Lemma \ref{tongyi}, there exists $a\in\{\pm1\}$ such that $x(i+a,j+1,k+1)\in D$ for $x(i,j,k)\in D$. Let $l$ be the minimal positive integer with $x(0,l,0)\in\langle s\rangle$. Then there exists $h\in[o(s)]$ such that $x(h,l,0)=x(0,0,0)$. It follows that $x(i,j,k)=x(i-h,j-l,k)$ for all $i,j,k\in\mathbb{Z}$.

Let $H$ be the subset of $\langle s\rangle$ such that $is\in H$ whenever there exists integer $j$ satisfying $x(i,0,k)=x(aj,j,j)\in D$ for some $k\in\{0,1\}$. We claim that $H$ is a subgroup of $\langle s\rangle$. Since $0\in H$, $H$ is not empty. Let $is,i's\in H$. Then there exist integers $j,j'$ such that $x(i,0,k)=x(aj,j,j)\in D$ and $x(i',0,k')=x(aj',j',j')\in D$ for some $k,k'\in\{0,1\}$. Note that $x(-aj',-j',-j')=x(-i',0,-k')$. It follows that $x(i-i',0,k-k')=x(a(j-j'),j-j',j-j')\in D$ and so $(i-i')s\in H$. Thus, our claim is valid.

Suppose that $\Gamma$ is isomorphic to the graph in Theorem \ref{main} (i). Note that $is\in H$ if and only if there exists integer $j$ satisfying $l\mid j$ and $x(aj,j,j)=x(i,0,k)$ for some $k\in\{0,1\}$. Since $x(al,l,l)=x(al-h,0,0)$ or $x(al-h,0,1)$, from the claim, one has $H=\langle(l-ah)s\rangle$. Since $H$ has ${\rm gcd}(ah-l,m)$ cosets in $\langle s\rangle$, from Lemma \ref{jb4} (i) and Lemma \ref{tongyi}, we get $x(3r+aj,j,j+t_r)\in D$ with $t_r\in\{0,1\}$ for all $j\in\mathbb{Z}$ and $1\leq r<{\rm gcd}(ah-l,m)/3-1$. It follows from Proposition \ref{(iv)} that (i) is valid.

Suppose that $\Gamma$ is isomorphic to the graph in Theorem \ref{main} (ii). Note that $is\in H$ if and only if there exists integer $j$ satisfying $l\mid j$ and $x(aj,j,j)=x(i,0,k)$ for some $k\in\{0,1\}$. Since $x(al,l,l)=x(al-h,0,0)$ or $x(al-h,0,1)$, from the claim, one has $H=\langle(l-ah)s\rangle$. Since $x(i,j,1)=x(i+m/2,j,0)$ for all integers $i,j$ from Construction \ref{cons-2}, at most one element of $x(i,j,0)+\langle ms/2\rangle$ belongs to $D$. Note that the subgroup $\langle (l-ah)s+\langle ms/2\rangle\rangle$ has ${\rm gcd}(ah-l,m/2)$ cosets in the quotient group $\langle s\rangle/\langle ms/2\rangle$. By Lemma \ref{jb4} (i) and Lemma \ref{tongyi}, we have $x(3r+aj+(j+t_r)m/2,j,0)=x(3r+aj+t_rm/2,j,j)\in D$ with $t_r\in\{0,1\}$ for all $1\leq r\leq {\rm gcd}(ah-l,m/2)/3-1$. If $\sigma_2(l)=0$ and $\sigma_2(m)=\sigma_2(l-ah)+1$, then ${\rm gcd}(ah-l,m/2)={\rm gcd}(ah-l,m)$, which implies that (i) is valid from Proposition \ref{(v)}; if $\sigma_2(h)\sigma_2(l)\neq0$ and $\sigma_2(m)\leq\sigma_2(l-ah)$, then ${\rm gcd}(ah-l,m/2)={\rm gcd}(ah-l,m)/2$, which implies that (ii) is valid.

We only need to consider the case that $\Gamma$ is isomorphic to the graph in Theorem \ref{main} (iii). By Construction \ref{cons-3}, we have $x(i,j,1)=x(i+(h+m-{\rm lcm}(m,h))/2,j+l/2,0)$ for all integers $i,j$.

\textbf{Case 1.} $\sigma_2(l)>1$.

Note that $x(i+al/2,l/2,l/2)=x(i+al/2,l/2,0)$ for all integers $i,j$. It follows that $is\in H$ if and only if there exists integer $j$ satisfying $l\mid j$ and $x(aj,j,j)=x(i,0,0)$. Since $x(al,l,l)=x(al-h,0,0)$, from the claim, one has $H=\langle(l-ah)s\rangle$.

By Construction \ref{cons-3}, we have $x((al-h)/2,0,1)=x((al+m-{\rm lcm}(h,m))/2,l/2,0)$. If $\sigma_2(h)=\sigma_2(m)$, then $x((al-h)/2,0,1)=x(al/2,l/2,0)=x(al/2,l/2,l/2)\in D$. Now suppose $\sigma_2(h)>\sigma_2(m)$. Then $x((al-h)/2,0,1)=x((al+m)/2,l/2,0)$. Since $\sigma_2(h)>\sigma_2(l)$, we have $\sigma_2(ah-l)=\sigma_2(l)$. The fact $\sigma_2(m)>\sigma_2(l)$ implies that there exists an integer $k$ such that $k\cdot x(al-h,0,0)=x(m/2,0,0)$. It follows that $x((al-h)/2,0,1)=x(al/2+akl-kh,l/2,0)=x(a(k+1/2)l,(k+1/2)l,0)=x(a(k+1/2)l,(k+1/2)l,(k+1/2)l)\in D$. We conclude that $x(i+(al-h)/2,j,k+1)=x(i,j,l)+x(anl/2,nl/2,nl/2)\in D$ for some $n\in\mathbb{Z}$ with $x(i,j,k)\in D$.

Since $H$ has ${\rm gcd}(ah-l,m)$ cosets in $\langle s\rangle$, from Lemma \ref{jb4} (i) and Lemma \ref{tongyi}, we have $x(3r+aj,j,j+t_r)\subseteq D$ with $t_r\in\{0,1\}$ for $1\leq r\leq {\rm gcd}((ah-l),m)/6-1$. By Proposition \ref{(vi)}, (ii) is valid.

\textbf{Case 2.} $\sigma_2(h)\geq\sigma_2(m)>\sigma_2(l)=1$ or $\sigma_2(h)=\sigma_2(m)=\sigma_2(l)=1$.

For each integer $k$, if $2\mid k$, then $x(akl/2,kl/2,kl/2)=x(k(al-h)/2,0,0)$; if $2\nmid k$ and $\sigma_2(h)=\sigma_2(m)$, then $x(akl/2,kl/2,kl/2)=x(akl/2,kl/2,1)=x((akl+h)/2,(k+1)l/2,0)=x(k(al-h)/2,0,0)$; if $2\nmid k$ and $\sigma_2(h)>\sigma_2(m)$, then $\sigma_2(m)>\sigma_2(ah-l)$, and so $i\cdot x(al-h,0,0)=x(m/2,0,0)$ for some integer $i$, which imply that $x(a(k+2i)l/2,(k+2i)l/2,(k+2i)l/2)=x(a(k+2i)l/2,(k+2i)l/2,1)=x((a(k+2i)l+h+m)/2,(k+2i+1)l/2,0)=x(k(al-h)/2,0,0)$.
It follows that $is\in H$ if and only if there exists integer $j$ satisfying $l/2\mid j$ and $x(aj,j,j)=x(i,0,0)$. The claim implies $H=\langle(l-ah)s/2\rangle$.

Since $H$ has ${\rm gcd}((ah-l)/2,m)$ cosets in $\langle s\rangle$, from Lemma \ref{jb4} (i) and Lemma \ref{tongyi}, we have $x(3r+aj,j,j+t_r)\subseteq D$ with $t_r\in\{0,1\}$ for $1\leq r\leq {\rm gcd}((ah-l)/2,m)/3-1$. If $\sigma_2(m)>1$, then $\sigma_2(m)>\sigma_2(ah-l)$, and so ${\rm gcd}((ah-l)/2,m)={\rm gcd}(ah-l,m)/2$, which imply that (ii) is valid from Proposition \ref{(vi)}; if $\sigma_2(m)=1$, then $\sigma_2(m)<\sigma_2(ah-l)$, and so ${\rm gcd}((ah-l)/2,m)={\rm gcd}(ah-l,m)$, which imply that (i) is valid from Proposition \ref{(vi)}.

\textbf{Case 3.} $\sigma_2(h)>\sigma_2(m)=\sigma_2(l)=1$.

For each integer $k$, if $2\mid k$, then $x(akl/2,kl/2,kl/2)=k/2\cdot x((al-h),0,0)$; if $2\nmid k$, from $\sigma_2(h)>\sigma_2(m)$, then $\sigma_2(m)=\sigma_2(ah-l)$ and $i\cdot x(al-h,0,0)=x(0,0,0)$ for some odd integer $i$, which imply $x(akl/2,kl/2,kl/2)=x(akl/2,kl/2,1)=x((akl+m+h)/2,(k+1)l/2,0)=x((k(al-h)+m)/2,0,0)=(k+i)/2\cdot x(al-h,0,0)$. It follows that $is\in H$ if and only if there exists integer $j$ satisfying $l/2\mid j$ and $x(aj,j,j)=x(i,0,0)$. In view of the claim, one gets $H=\langle(l-ah)s\rangle$.

Since $H$ has ${\rm gcd}(ah-l,m)$ cosets in $\langle s\rangle$, from Lemma \ref{jb4} (i) and Lemma \ref{tongyi}, we have $x(3r+aj,j,j+t_r)\subseteq D$ with $t_r\in\{0,1\}$ for $1\leq r\leq {\rm gcd}(ah-l,m)/3-1$. It follows from Proposition \ref{(vi)} that (i) is valid.
\end{proof}

\bigskip
\noindent \textbf{Acknowledgements}~~
Y. Yang is supported by NSFC (12101575) and the Fundamental Research
Funds for the Central Universities (2652019319). X. Ma is supported by the National Natural Science Foundation of China
(11801441, 12326333), and Shaanxi Fundamental Science Research Project for Mathematics and Physics
(Grant No. 22JSQ024).

\section*{Data Availability Statement}

Data sharing not applicable to this article as no datasets were generated or analysed during the current study.

\end{document}